\newtheorem{theorem}{Theorem}[section]
\newtheorem{proposition}[theorem]{Proposition}
\newtheorem{lemma}[theorem]{Lemma}
\newtheorem{corollary}[theorem]{Corollary}
\newtheorem{example}[theorem]{Example}
\newtheorem{remark}[theorem]{Remark}
\def\cC{\mathcal C}
\def\cD{\mathcal D}
\def\cL{\mathcal L}
\def\cP{\mathcal P}
\def\cS{\mathcal S}
\def\PG{{\rm{PG}}}
\newcommand{\PSL}{\mbox{\rm PSL}}
\newcommand{\PGL}{\mbox{\rm PGL}}
\newcommand{\MaxInt}{{\rm MaxInt}}
\title{On two M\"obius functions for a finite non-solvable group}
\date{}
\author{Francesca Dalla Volta}
\address{Dipartimento di Matematica e Applicazioni, Universit\`a degli Studi di Milano-Bicocca, via Roberto Cozzi 55, 20125 Milano, Italy}
\email {francesca.dallavolta@unimib.it}
\author{Giovanni Zini}
\address{Dipartimento di Matematica e Fisica, Universit\`a degli Studi della Campania ''Luigi Vanvitelli'', viale Lincoln 5, 81100 Caserta, Italy}
\email {giovanni.zini@unicampania.it}
\begin{document}

%\vspace{0.5cm}\noindent {\em Keywords}:
%\vspace{0.2cm}\noindent

%\vspace{0.5cm}\noindent {\em Subject classifications}:
%\vspace{0.2cm}\noindent

\begin{abstract}
Let $G$ be a finite group, $\mu$ be the M\"obius function on the subgroup lattice of $G$, and $\lambda$ be the M\"obius function on the poset of conjugacy classes of subgroups of $G$. It was proved by Pahlings that, whenever $G$ is solvable, the property $\mu(H,G)=[N_{G^\prime}(H):G^{\prime}\cap H]\cdot\lambda(H,G)$ holds for any subgroup $H$ of $G$. It is known that this property does not hold in general; for instance it does not hold for every simple groups, the Mathieu group $M_{12}$ being a counterexample. In this paper we investigate the relation between $\mu$ and $\lambda$ for some classes of  non-solvable groups; among them, the minimal non-solvable groups. We also provide several examples of groups not satisfying the property.
\end{abstract}

\maketitle

\begin{small}

{\bf Keywords:} M\"obius function, subgroup lattice, non-solvable group

{\bf 2010 MSC:} 20D30, 05E15, 06A07

\end{small}

\section{Introduction}

The M\"obius function of a locally finite poset $\cP$ is the function $\mu_{\cP}:\cP\times\cP\to\mathbb{Z}$ defined by
$$
\mu_{\cP}(x,y)=0\;\textrm{if}\;x\not\leq y,\quad \mu_{\cP}(x,x)=1,\quad \mu_{\cP}(x,y)=-\sum_{x<z\leq y}\mu_{\cP}(z,y)\;\textrm{if}\; x<y.
$$
Let $G$ be a finite group and $\cL$ be the subgroup lattice of $G$.  
%and $\cC$ be the poset of conjugacy classes of subgroups of $G$, where $[H]\leq[K]$ if and only if $H\leq K^g$ for some $g\leq G$.
The M\"obius function of $G$ is defined as $\mu:~\cL~\to~\mathbb{Z}$,
$ H\mapsto\mu_{\cL}(H,G)$, and it is simply denoted by $\mu(H)$. 
%while the function $\cC\to\mathbb{Z},[H]\mapsto\mu_{\cC}([H],[G])$ is denoted by $\lambda(H)$.
The M\"obius function $\mu$ of a finite group $G$ was considered by Hall \cite{Hall} in order to enumerate generating tuples of elements of $G$. Actually, the M\"obius function of $G$ has many other applications in different areas of mathematics, in the context of enumerative problems where the M\"obius inversion formula (\cite[Proposition 3.7.1]{Stanley}) turns out to be applicable. 
%These areas include group theory, e.g. to count epimorphisms onto $G$; graph theory, e.g. to count certain hypermaps with automorphism group $G$, or $G$-coverings of graphs; algebraic topology, e.g. homotopy properties of certain complexes associated to $G$, computer science, e.g. cellular automata with configuration space related to $G$. For a detailed description see \cite{DownsJones1987,Castillo,Liu,Quillen} and the references therein.
These areas include group theory, graph theory, algebraic topology, computer science; for a detailed description, see \cite{Castillo,DownsJones1987,Liu,Quillen} and the references therein.

For instance, beginning with Hall \cite{Hall}, the M\"obius function of  $\cL$ is involved in problems related to the probability of generating a finite group $G$ by a given number of elements. Later, Mann \cite{Mann1} introduced the following complex series for any profinite group $G$:
$$P_G(s)=\sum _{H\leq_{o}G}\frac {\mu (H)}{[G:H]^s},$$
where $H$ ranges over all open subgroups of $G$. Mann conjectured
that this sum is absolutely convergent in some half complex plane whenever $G$ is a positively finitely generated (PFG) group.
The conjecture is nowadays reduced to the following one (see \cite{Mann2,Lucchini}):
there exist $c_1,c_2\in\mathbb{N}$ such that, for any almost simple group $G$, $|\mu(H)|\leq [G:H]^{c_1}$ for any $H<G$; and, for any $n\in\mathbb{N}$, the number of subgroups $H<G$ of index $n$ in $G$ satisfying $G=H\,{\rm soc}(G)$ and $\mu(H)\ne0$ is upper bounded by $n^{c_2}$.

One could try to attach this problem looking for bounds for a different M\"obius function:
instead of the lattice ${\cL}$, we consider the poset $\cC$ of conjugacy classes of subgroups of $G$, where $[H]\leq[K]$ if and only if $H\leq K^g$ for some $g\leq G$; its M\"obius function $\cC\to\mathbb{Z}$, $[H]\mapsto\mu_{\cC}([H],[G])$ is denoted by $\lambda(H)$. Clearly, $\cC$ is the image of $\cL$ under the order-preserving map $H\mapsto[H]$, and $\cC$ is in general more tractable than $\cL$; see the comments in \cite[Section 3.1]{BDL}.

%\textcolor{magenta}{
%The function $\lambda$ is related to the table of marks $M(G)$ of $G$, as follows. Let $[H_1],\ldots,[H_n]$, $H_i\leq G$, be the distinct conjugacy classes of subgroups of $G$, and let $Fix_{G/H_i}(H_j)=\{gH_i\mid xgH_i=gH_i\;\forall x\in H_j\}$ be the set of fixed points of $H_j$ on $G/H_i$; then, $M(G)=(m_{ij})$ is an $n\times n$ matrix with $m_{ij}=|Fix_{G/H_i}(H_j)|$. 
%Let $E=(e_{ij})$ be the $n\times n$ matrix with $e_{ij}=0$ if $m_{ij}=0$ and $e_{ij}=1$ if $m_{ij}\ne0$. It can be proved that $E$ is the transpose of the incidence matrix of $\cC$ (i.e. $e_{ij}=1$ if $[H_j]\leq[H_i]$ and $e_{ij}=0$ otherwise), and hence that $\mu_{\cC}([H_i],[H_j])$ is the $(j,i)$-element of $E^{-1}$; for more details, see \cite[Section 2]{LuxPahlings}.
%}
%
%\textcolor{red}{
%The study of the subgroup pattern of $G$ is of interest for various computational purposes related to the study of the subgroup lattice of $G$, since $\cC$ is the image of $\cL$ by the order-preserving map $H\mapsto[H]$ and $\cC$ is in general more tractable than $\cL$; see the comments in \cite[Section 3.1]{BDL}.
%It is therefore of interest to study the relation between the functions $\mu$ and $\lambda$.
%}
%It would be very nice to have a relation between the functions $\mu $ and $\lambda $ computations, instead of the full subgroup lattice $\cL$ of $G$, it is enough to know the poset $\cC$ of conjugacy classes of subgroups of $G$; Buekenhout called $\cC$ the ''subgroup pattern'' of $G$ (see \cite{Buek}).
%It is of interest to study the relation between the functions $\mu$ and $\lambda$.

Hawkes, Isaacs and \"Ozaydin \cite{HIO} showed that
$$\mu(\{1\})=|G^\prime|\cdot\lambda(\{1\})$$
holds for any finite solvable group $G$, and later Pahlings \cite{Pahlings} proved that
\begin{equation}\label{eq:mulambda}
\mu(H)=[N_{G^\prime}(H):G^\prime\cap H]\cdot\lambda(H)
\end{equation}
holds for any $H\leq G$ whenever $G$ is finite and solvable.
We say that $G$ satisfies the $(\mu,\lambda)$-property if Equation \eqref{eq:mulambda} holds for any $H\leq G$.
It is known that the $(\mu,\lambda)$-property does not hold for every finite group; for instance, it does not hold for the Mathieu group $M_{12}$ (see \cite{Bianchi}) and for the unitary groups $U_3(2^{2^n})$ (see \cite{Zini}).
Neverthless, finding relations between the functions $\mu$ and $\lambda$ may be of some interest, also because of the above conjectures.
% Yet, if a relation between $\mu $ and $\lambda $ does exist, it may be of help to solve the above conjectures.

%\textcolor{violet}{
%Note the following instance of use of the $(\mu,\lambda)$-property for probabilistic purposes: if $\gamma$ is a constant and $G$ is a simple group satisfying the $(\mu,\lambda)$-property such that for any $n\in\mathbb{N}$ the number of subgroups $H$ of index $n$ in $G$ with $G=H\,{\rm soc}(G)$ and $\lambda(H)\ne0$ is upper bounded by $n^\gamma$, then the number of subgroups $H$ of index $n$ in $G$ with $G=H\,{\rm soc}(G)$ and $\mu(H)\ne0$ is upper bounded by $n^{\gamma+1}$; that is, the second part of the Lucchini's conjecture mentioned above holds for $G$ with $c_2=\gamma+1$. 
%}

%In this paper we extend the work of Pahlings on the finite groups for which the $(\mu,\lambda)$-property holds.
In this paper we extend the work of Pahlings about the $(\mu,\lambda)$-property for finite groups.
%As already mentioned, the $(\mu,\lambda)$-property is not valid for all simple groups.
In Section \ref{sec:minnonsolv} we show that the $(\mu,\lambda)$-property is valid for some infinite families of non-solvable groups, beginning in Theorem \ref{prop:minsimple} with minimal simple groups. This is then generalized in Theorem \ref{th:minimalnonsolvable} to groups which are not solvable but close to be, that is, to all minimal non-solvable groups (i.e. non-solvable groups whose proper subgroups are all solvable).
We also present other infinite families of non-solvable groups satisfying the $(\mu,\lambda)$-property; namely, the simple groups $L_2(q)$, $Sz(q)$, $R(q)$, and the almost simple groups ${\rm PGL}_2(q)$.
%\textcolor{red}{
%To this aim, we note in Lemma \ref{maxintlambda} that $\lambda(H)$ is non-zero only if $H$ is intersection of maximal subgroups of $G$.
%}
% (Lemmas \ref{lemma:l2q}, \ref{lemma:ree}, Remark \ref{remark:PGL(2,q)}).
In Section \ref{sec:product} we provide sufficient conditions for a direct product of finite groups to satisfy the $(\mu,\lambda)$-property;
we also point out relations about the $(\mu,\lambda)$-property between a finite group and its extensions.
Finally, Section \ref{sec:examples} contains examples of group satisfying and of groups not satisfying the $(\mu,\lambda)$-property.
%We notice that $U_3(3)$ is the smallest simple group for which the $(\mu,\lambda)$-property does not hold (Remark \ref{remark:magma}); other considerations about groups not satisfying the $(\mu,\lambda)$-property are carried out in last Section (Remarks \ref{remark:magma} to \ref{remark:Ngroups}).

\section{Notations and preliminaries}

Throughout the paper $G$ is a finite group, $\mu(H,G)$ is the M\"obius function of $H\leq G$ in the subgroup lattice $\cL$ of $G$, and $\lambda(H,G)$ is the M\"obius function of the conjugacy class $[H]$ of $H\leq G$ in the poset $\cC$ of conjugacy classes of subgroups of $G$, ordered as follows: $[H]\leq[K]$ if $H\leq K^g$ for some $g\in G$.
We denote these functions also by $\mu(H)$ and $\lambda(H)$, when the group $G$ is clear.
The notation $\mu(n)$ stays for the number-theoretic M\"obius function on the positive integers, which coincides with $\mu_{\cD}(1,n)$ in the divisibility poset $\cD$ on $\mathbb{N}$; we will use the number-theoretical M\"obius function in our computations in Tables \ref{tabella:l2qpari} to \ref{tabella:ree}.
Also, we point out that we mostly follow the group-theoretical notation of the ATLAS \cite{ATLAS}.

We denote by $\MaxInt(G)$ the set whose elements are $G$ and the subgroups of $G$ which are intersection of maximal subgroups of $G$.
The knowledge of $\MaxInt(G)$ is essential in the study of $\mu$, as Lemma \ref{maxintmu} shows.

\begin{lemma}[{\rm Hall \cite[Theorem 2.3]{Hall}}]\label{maxintmu}
Let $H\leq G$ be such that $\mu(H)\ne0$. Then $H\in\MaxInt(G)$.
\end{lemma}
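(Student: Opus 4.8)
The plan is to prove the contrapositive: if $H\le G$ and $H\notin\MaxInt(G)$, then $\mu(H)=0$, where I write $\mu(H)=\mu(H,G)$. Since $G\in\MaxInt(G)$ by definition, I may assume $H<G$. Let $M$ denote the intersection of all maximal subgroups of $G$ containing $H$; as $H<G$, at least one such maximal subgroup exists, so $M<G$, and $H\le M$. The hypothesis $H\notin\MaxInt(G)$ says precisely that $H$ is not the intersection of the maximal subgroups above it, i.e.\ $H<M$. I would then argue by induction on the index $[G:H]$ that $\mu(H)=0$ for every such $H$.

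For the inductive step I would start from the standard consequence of the defining recurrence, $\sum_{N\le K\le G}\mu(K)=0$ for every $N<G$ (equivalent to the recursion itself). Applying it with $N=M$ gives $\sum_{M\le K\le G}\mu(K)=0$. Since $M>H$, the set $\{K:M\le K\le G\}$ coincides with $\{K:H<K\le G,\ K\ge M\}$; removing these terms, which sum to zero, from $\mu(H)=-\sum_{H<K\le G}\mu(K)$ leaves
\[
\mu(H)=-\sum_{\substack{H<K\le G\\ K\not\ge M}}\mu(K).
\]

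The crux is to show that every $K$ occurring in this reduced sum again fails to be an intersection of maximal subgroups of $G$, so that its term vanishes by induction (note $[G:K]<[G:H]$ since $K>H$). The key observation is that any maximal subgroup of $G$ containing $K$ also contains $H$, hence contains $M$ by the very definition of $M$; therefore the intersection $M_K$ of all maximal subgroups of $G$ containing $K$ satisfies $M_K\ge M$. As $K\not\ge M$, we obtain $K\ne M_K$, that is $K\notin\MaxInt(G)$, and the induction hypothesis yields $\mu(K)=0$. Hence the whole sum vanishes and $\mu(H)=0$; if the sum is empty the conclusion is immediate, which also covers the base of the induction.

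I expect the main obstacle to be exactly this last observation, together with the bookkeeping ensuring that no term of the reduced sum survives; the remaining steps are formal manipulations of the recurrence. Conceptually the argument is the coatom form of Rota's crosscut theorem applied to the interval $[H,G]$, whose coatoms are the maximal subgroups of $G$ containing $H$: since no subfamily of them intersects in exactly $H$, the alternating count computing $\mu(H)$ is empty, and the induction above serves as a self-contained substitute for invoking that theorem.
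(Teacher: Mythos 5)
Your proof is correct. The paper does not actually spell out a proof of this lemma (it cites Hall and merely remarks that it follows from $\cL$ being closed under meets), but your argument --- induction on $[G:H]$, subtracting the vanishing sum over the interval $[M,G]$ and checking that every surviving $K$ again fails to lie in $\MaxInt(G)$ because the maximal subgroups over $K$ all contain $M$ --- is precisely the argument the paper gives for the companion Lemma \ref{maxintlambda}, so the approach is essentially the same.
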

Lemma \ref{maxintmu} follows from the fact that $\mathcal{L}$ is a poset, and in particular that two elements $H_1, H_2\in\mathcal{L}$ have an meet $H_1\cap H_2$.
Note that in general the poset $\cC$ is not a lattice and the meet of $[H_1]$ and $[H_2]$ does not always exist.
Neverthless, we will prove the analogue of Lemma \ref{maxintmu} also for $\cC$.

We say that {\bf $G$ satisfies the $(\mu,\lambda)$-property} if the following relation holds for any $H\leq G$:
$$ \mu(H)=[N_{G^\prime}(H):G^\prime \cap H]\cdot\lambda(H). $$

%Recall that a {\it minimal simple group} is a simple non-abelian group all of whose proper subgroups are solvable,

Recall that a {\it minimal non-solvable group} is a a non-solvable group all of whose proper subgroups are solvable.
A minimal non-solvable group which is simple is called {\it minimal simple}.
We point out that, if $G$ is a minimal non-solvable group and $\Phi(G)$ is its Frattini subgroup, then $G/\Phi(G)$ is minimal simple; hence, minimal simple groups are exactly the Frattini-free minimal non-solvable groups.

Finite minimal simple groups are classified as follows.

%\textcolor{red}{
%(e' vero con ragionamento elementare, ad esempio qui: 
%https://math.stackexchange.com/questions/1476963/why-g-phig-is-a-minimal-simple-group-where-phig-is-the-frattini-s )
%}

\begin{lemma}[{\rm Thompson \cite{Thompson}}]\label{lemma:minsimple}
The finite minimal simple groups are the following ones:
\begin{itemize}
\item $L_2(2^r)$, where $r$ is a prime;
\item $L_2(3^r)$, where $r$ is an odd prime;
\item $L_2(p)$, where $p>3$ is a prime such that $5\mid(p^2+1)$;
\item $Sz(2^r)$, where $r$ is an odd prime;
\item $L_3(3)$.
\end{itemize}
\end{lemma}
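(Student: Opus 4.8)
The plan is to prove that any finite minimal simple group $G$ lies in the stated list by exploiting the severe restriction that \emph{every} proper subgroup of $G$ is solvable; in particular all of its local subgroups (normalizers of nontrivial $p$-subgroups) are solvable, so $G$ is an $N$-group in the sense of Thompson. The argument is local-analytic throughout: one pins down the structure of the normalizers of $p$-subgroups for each prime $p$ dividing $|G|$, and then invokes recognition theorems that characterize the groups in the list by this local data.

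First I would observe that, by the Feit--Thompson odd order theorem, a non-abelian finite simple group has even order, so $G$ contains an involution $t$; since $C_G(t)$ is a proper subgroup it is solvable, and more generally $N_G(P)$ is a proper solvable subgroup for every nontrivial $p$-subgroup $P$. I would then treat the prime $2$ separately from the odd primes. For an odd prime $p$, the $p$-local structure is controlled by Thompson factorization together with Glauberman's $ZJ$-theorem, which, outside the configurations where $\SL_2(p)$-sections obstruct $p$-stability, forces a characteristic subgroup of a Sylow $p$-subgroup to be normal in $G$ and hence, by simplicity, leads to a contradiction. This sharply limits the admissible odd-local configurations.

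Next I would analyze a Sylow $2$-subgroup $S$ of $G$ and the $2$-local subgroups, using Thompson's transitivity and uniqueness theorems to reduce the isomorphism type of $S$ and the structure of $C_G(t)$ to a short list. Each surviving configuration is then matched to a concrete simple group by a recognition theorem: the case in which the centralizer of every involution is a $2$-group yields the Suzuki groups $Sz(2^r)$; a dihedral Sylow $2$-subgroup yields $L_2(q)$ via the Gorenstein--Walter classification; and the remaining low-rank cases give $L_2(p)$ and $L_3(3)$. The arithmetic side conditions ($r$ prime for $L_2(2^r)$, $r$ an odd prime for $L_2(3^r)$ and $Sz(2^r)$, and $5\mid p^2+1$ for $L_2(p)$) surface only at the end: they are exactly what is needed so that \emph{all} proper subgroups, and not merely the local ones, are solvable. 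For instance $5\mid p^2+1$ forces $p\equiv\pm2\pmod5$, which is precisely the condition under which $L_2(p)$ contains no $A_5$; this is what distinguishes the genuinely minimal simple groups from the wider class of simple $N$-groups (which also includes, e.g., $A_6=L_2(9)$, $A_7$ and $U_3(3)$).

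The main obstacle is that this case analysis is vast and delicate: it is Thompson's celebrated classification of $N$-groups, and the subtle point is precisely the $2$-local and $3$-local uniqueness machinery (signalizer functors, and the determination of groups with a strongly embedded subgroup via Bender's theorem) required to eliminate every configuration lying outside the list. Since reproducing this is far beyond our scope, we simply invoke Thompson's theorem.
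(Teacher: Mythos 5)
Your proposal is correct and ends where the paper does: Lemma \ref{lemma:minsimple} is stated with attribution to Thompson \cite{Thompson} and no proof is given, so invoking Thompson's classification of $N$-groups is exactly the paper's approach. Your sketch of the local-analytic strategy and the role of the arithmetic side conditions (e.g.\ $5\mid p^2+1$ excluding an $A_5$ subgroup of $L_2(p)$) is accurate but, as you say, not something either you or the paper is expected to reproduce.
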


%We point out that
%If $G$ is a finite minimal non-solvable group with Frattini subgroup $\Phi(G)$, then it is known that $G/\Phi(G)$ is minimal simple

\section{$(\mu,\lambda)$-property for some families of non-solvable groups}\label{sec:minnonsolv}

\subsection{$(\mu,\lambda)$-property for minimal non-solvable groups}\label{subsec:minnonsolv}

Here we show that the $(\mu,\lambda)$-property holds for many classes of finite groups, among which all minimal non-solvable groups.
A key role in our proofs (as well as in the explicit computations of the $\lambda$ function for any group) is played by Lemma \ref{maxintlambda}.
%The poset $\cC$ is not a lattice in general as the meet of two conjugacy classes is not defined; yet, the analogous of Lemma \ref{maxintmu} still holds for the $\lambda$ function.

%A key role in the proofs is played by the following lemma about subgroups $H\leq G$ with $\lambda(H)\ne0$.
%Although the poset $\cC$ is not a lattice in general,  
%In \cite{Hall} the  Lemma \ref{maxintmu} is proved pointing  out the relevance of the fact that the poset of subgroups of a group $G$ is actually a lattice, so that it does exist the meet of two elements. It is easy to realise that the poset $\cL$ is not in general a lattice. Nevertherles, the analogue of Lemma \ref{maxintmu} holds.

\begin{lemma}\label{maxintlambda}
Let $H\leq G$ be such that $\lambda(H)\ne0$. Then $H\in\MaxInt(G)$.
\end{lemma}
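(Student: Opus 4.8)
The plan is to avoid any lattice-theoretic tool such as Weisner's theorem — which is what makes Hall's Lemma \ref{maxintmu} work in $\cL$ but is unavailable here, since $\cC$ is not a lattice — and instead to run a downward induction on $\cC$ that reduces everything to the defining recurrence of the Möbius function together with a single transport argument into $\cL$. Concretely, for a subgroup $H\le G$ with $H\notin\MaxInt(G)$ (so in particular $H<G$), set $\bar H=\bigcap\{M: M\ \text{maximal in}\ G,\ M\ge H\}$, the intersection of all maximal subgroups containing $H$. Then $\bar H\in\MaxInt(G)$, and $H<\bar H$ strictly (otherwise $H=\bar H\in\MaxInt(G)$); moreover $\bar H\le M<G$ for some maximal $M$, so $[\bar H]\ne[G]$. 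I would prove the contrapositive statement, namely that $H\notin\MaxInt(G)$ implies $\lambda(H)=0$, by induction on $\cC$ from the top class $[G]$ downwards, the inductive hypothesis being that $\lambda([K])\ne0\Rightarrow K\in\MaxInt(G)$ for every class $[K]$ with $[K]>[H]$. This makes sense because $\MaxInt(G)$ is a union of conjugacy classes and $\lambda$ is a class function.

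The heart of the argument is the equivalence, valid for every $K\in\MaxInt(G)$,
$$[H]\le[K]\iff[\bar H]\le[K].$$
The implication $\Leftarrow$ is immediate from $H\le\bar H$. For $\Rightarrow$, note first that the closure operator is conjugation-equivariant, $\overline{H^{x}}=(\bar H)^{x}$, because conjugation by $x$ permutes the maximal subgroups of $G$. Now $[H]\le[K]$ means $H^{x}\le K$ for some $x\in G$; writing $K=M_1\cap\cdots\cap M_r$ as an intersection of maximal subgroups, each $M_i\ge K\ge H^{x}$, so each $M_i$ is a maximal subgroup containing $H^{x}$ and hence contains $\overline{H^{x}}=(\bar H)^{x}$. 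Intersecting gives $(\bar H)^{x}\le K$, i.e. $[\bar H]\le[K]$. This is precisely where Hall's lattice argument is replaced: the information that $K$ lies above a conjugate of $H$ is upgraded, inside $\cL$, to the information that $K$ lies above the corresponding conjugate of $\bar H$.

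With these ingredients the computation is short. Since $[H]<[G]$, the defining recurrence gives $\lambda(H)=-\sum_{[H]<[K]\le[G]}\lambda([K])$. By the inductive hypothesis every nonzero term has $K\in\MaxInt(G)$, so the sum is unchanged if I restrict to such $K$; by the equivalence above this index set equals $\{[K]:K\in\MaxInt(G),\ [\bar H]\le[K]\le[G]\}$, where strictness is automatic on both sides because $[\bar H]>[H]$ forces $[K]\ge[\bar H]>[H]$. Applying the inductive hypothesis once more to the classes $[K]\ge[\bar H]>[H]$, I may drop the constraint $K\in\MaxInt(G)$ again, obtaining
$$\lambda(H)=-\sum_{[\bar H]\le[K]\le[G]}\lambda([K])=-\,\delta_{[\bar H],[G]}=0,$$
the middle equality being the defining relation $\sum_{[\bar H]\le[K]\le[G]}\mu_{\cC}([K],[G])=\delta_{[\bar H],[G]}$ and the last one holding because $[\bar H]\ne[G]$.

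The only genuinely delicate point, and the step I expect to be the crux, is the equivalence of the two index sets for $K\in\MaxInt(G)$; everything else is bookkeeping with the recurrence. The care needed there is exactly that $\cC$ has no meet operation, so one cannot form ``the intersection of the maximal classes above $[H]$'' directly and must instead perform the closure inside $\cL$ and push the resulting containment back down to $\cC$, using that $\MaxInt(G)$ is a union of conjugacy classes.
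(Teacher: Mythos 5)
Your proof is correct and follows essentially the same route as the paper's: both argue by induction down the poset, replace $H$ by the intersection $\bar H$ of all maximal subgroups containing it, show that the conjugacy classes with nonzero $\lambda$ above $[H]$ are exactly those above $[\bar H]$, and conclude that the defining recurrence telescopes to $0$ because $[\bar H]\ne[G]$. Your explicit check that $\overline{H^{x}}=(\bar H)^{x}$ and the resulting equality of index sets merely makes precise a step the paper leaves implicit.
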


\begin{proof}
We use induction on $[G:H]$. The result is clearly true for $H=G$.
Let $H<G$ and let $K\in\MaxInt(G)$ be the intersection of all maximal subgroups of $G$ which contain $H$; thus, $K\leq M$ for any $M\in\MaxInt(G)$ containing $H$.
Suppose $H\notin\MaxInt(G)$, so that $H<K$.
Let $N\leq G$ be such that $H<N$ and $\lambda(N)\ne0$. By induction $N\in\MaxInt$, and hence $K\leq N$.
Therefore,
$$ \lambda(H)=-\sum_{[H]<[N]\leq[G],\,\lambda(N)\ne0}\lambda(N)=-\sum_{[K]\leq[N]\leq[G]}\lambda(N)=0. $$
\end{proof}

From Lemmas \ref{maxintmu} and \ref{maxintlambda} follows that only subgroups of $G$ containing $\Phi(G)$ have to be considered:

\begin{corollary}\label{cor:frattini}
If $H\leq G$ and $\Phi(G)\not\leq H$, then $\mu(H)=\lambda(H)=0$.
\end{corollary}

Theorem \ref{prop:minsimple} proves the $(\mu,\lambda)$-property for minimal simple groups. Clearly, if $G$ is simple then $G=G^\prime$ and the $(\mu,\lambda)$-property for $H\leq G$ reads $\mu(H)=[N_G(H):H]\lambda(H)$.

\begin{theorem}\label{prop:minsimple}
Let $G$ be a minimal simple group. Then $G$ satisfies the $(\mu,\lambda)$-property.
\end{theorem}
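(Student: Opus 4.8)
The plan is to prove the $(\mu,\lambda)$-property for a minimal simple group $G$ by analyzing, for each subgroup $H\leq G$, the structure of the interval $[H,G]$ in the lattice $\cL$ together with the action of $N_G(H)$ on it. Since $G$ is simple we have $G=G^\prime$, so the identity to verify reads $\mu(H)=[N_G(H):H]\cdot\lambda(H)$. By Corollary \ref{cor:frattini} we may restrict to those $H$ with $\mu(H)\neq0$ or $\lambda(H)\neq0$, and by Lemmas \ref{maxintmu} and \ref{maxintlambda} each such $H$ lies in $\MaxInt(G)$; moreover a minimal simple group is Frattini-free, so the relevant $H$ are genuinely nontrivial intersections of maximal subgroups. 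The cases $H=G$ and $H=\{1\}$ should be dispatched first: for $H=G$ both sides equal $1$, and for $H=\{1\}$ the identity $\mu(\{1\})=|G|\cdot\lambda(\{1\})$ is exactly the Hawkes--Isaacs--\"Ozaydin relation (here with $G=G^\prime$), which one can verify directly or cite.

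First I would exploit the essential feature of minimal simple groups coming from Thompson's classification in Lemma \ref{lemma:minsimple}: every proper subgroup of $G$ is solvable, and the maximal subgroups are explicitly known in each of the five families. The key structural observation is that for a proper subgroup $H<G$ with $\mu(H)\neq0$, the whole open interval $(H,G)$ in $\cL$ consists of proper, hence solvable, overgroups of $H$, while $G$ itself sits at the top. I would therefore try to relate the computation inside $[H,G]$ to a computation inside the subgroup lattice of a solvable group. Concretely, for a fixed conjugacy class representative $H$, I expect to compare the lattice-sum defining $\mu(H)$, which ranges over all subgroups $N$ with $H<N\leq G$, against the class-poset-sum defining $\lambda(H)$, which ranges over conjugacy classes $[N]$ with $[H]<[N]\leq[G]$; the factor $[N_G(H):H]$ should emerge as a counting factor recording how many lattice-conjugates of a given $[N]$ contain the fixed $H$.

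The central technical step, and the one I expect to be the main obstacle, is the orbit-counting argument that converts the lattice sum into the class sum. The natural tool is the action of $N_G(H)$ on the set of overgroups of $H$: if $N$ is an overgroup of $H$, then $N_G(H)$ acts on $\{N^g : g\in G,\ H\leq N^g\}$, and one wants to show that the subgroups in $(H,G)$ lying in a fixed $G$-class $[N]$ break into $N_G(H)$-orbits in a way that produces precisely the factor $[N_G(H):H]$ after summing the M\"obius contributions. Here the solvability of every proper overgroup is what I would lean on, since it lets me invoke Pahlings' theorem \eqref{eq:mulambda} internally: for each proper $N$ with $H\leq N<G$ one has $\mu(H,N)=[N_{N^\prime}(H):N^\prime\cap H]\cdot\lambda_N(H)$ in the solvable group $N$, and I would try to assemble these local identities into the global one by a double-counting or an induction on $[G:H]$ parallel to the induction used in the proof of Lemma \ref{maxintlambda}. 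The delicate point will be controlling the discrepancy between normalizers computed in $G$ versus in the various $N$, and ensuring that the contributions at the top level (the jump from proper solvable subgroups to the simple group $G$) combine correctly; this top-level term is exactly where the $(\mu,\lambda)$-property can fail for general groups, so a careful case-by-case check using the explicit maximal subgroup structure from Lemma \ref{lemma:minsimple} is likely unavoidable.
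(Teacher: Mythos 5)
There is a genuine gap: the step you yourself identify as ``the central technical step'' --- converting the lattice sum over overgroups $N$ of $H$ into the sum over classes $[N]$ with the factor $[N_G(H):H]$ appearing uniformly --- is precisely the content of the theorem, and your proposal leaves it at the level of ``I expect'' and ``I would try''. The orbit-counting heuristic does not work as stated: the number of $G$-conjugates of $N$ that contain a fixed $H$ is $[N_G(H):N_G(H)\cap N_G(N^g)]$ summed over suitable orbits, and this quantity genuinely depends on $N$; it does not factor as $[N_G(H):H]$ independently of $N$. This is exactly why the $(\mu,\lambda)$-property fails for groups such as $M_{12}$ and $U_3(3)$, so no soft argument of this kind can succeed without using the specific subgroup structure. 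Likewise, the plan to ``assemble'' Pahlings' identity \eqref{eq:mulambda} applied inside each proper (solvable) overgroup $N$ runs into the problem that the resulting $\lambda$-values are computed with respect to $N$-conjugacy, and there is no general mechanism relating these to $G$-conjugacy across all overgroups simultaneously; Pahlings' own proof for solvable groups is an induction on a chief series, not an orbit count, and does not localize in the way you propose. A further slip: for $H=\{1\}$ you cannot cite the Hawkes--Isaacs--\"Ozaydin relation $\mu(\{1\})=|G^\prime|\cdot\lambda(\{1\})$, since that theorem is proved only for solvable $G$, whereas here $G$ is simple.

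The paper takes a different and essentially computational route: by Thompson's classification (Lemma \ref{lemma:minsimple}) a minimal simple group is $L_2(q)$, $Sz(q)$ or $L_3(3)$, and the theorem is reduced to Proposition \ref{lemma:l2q} for $L_2(q)$, Proposition \ref{lemma:szq} for $Sz(q)$, and a Magma check for $L_3(3)$. Those propositions do not argue abstractly from solvability of proper subgroups; they take the explicitly known data ($\MaxInt(G)$, normalizers, and the values of $\mu$ from Downs and Downs--Jones), compute $\lambda$ on $\MaxInt(G)$ by downward induction through the poset of conjugacy classes using Lemma \ref{maxintlambda}, and verify the identity case by case for each type of subgroup. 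If you want to salvage your approach, you would need to replace the orbit-counting hope with exactly this kind of explicit, family-by-family verification.
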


\begin{proof}
By Lemma \ref{lemma:minsimple}, either $G= L_2(q)$ for some $q$, and the claim follows from Proposition \ref{lemma:l2q} below; or $G= Sz(q)$ for some $q$, and the claim follows from Proposition \ref{lemma:szq} below; or $G=L_3(3)$, and the claim follows by direct inspection with Magma \cite{MAGMA}.
\end{proof}

The proofs of the next propositions are quite technical. We go into details in the proof of Proposition \ref{lemma:szq}, while we just sketch the proofs of the remaining Propositions \ref{lemma:l2q}, \ref{prop:PGL(2,q)} and \ref{lemma:ree}.

Note that Propositions \ref{lemma:szq} and \ref{lemma:l2q} refer to larger classes of simple groups than just the minimal ones.

\begin{proposition}\label{lemma:szq}
Let $q=2^e$ for some odd $e\geq3$, and $G$ be the simple Suzuki group $Sz(q)$. Then $G$ satisfies the $(\mu,\lambda)$-property.
\end{proposition}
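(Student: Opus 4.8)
Since $G=Sz(q)$ is simple we have $G'=G$, so the $(\mu,\lambda)$-property to be proved reads $\mu(H)=[N_G(H):H]\,\lambda(H)$ for every $H\le G$, and $\Phi(G)=1$. By Lemmas~\ref{maxintmu} and~\ref{maxintlambda} it suffices to verify this identity for the subgroups $H\in\MaxInt(G)$, since both sides vanish otherwise. The plan is therefore to enumerate, up to conjugacy, the members of $\MaxInt(G)$ and to compute for each of them the three quantities $\mu(H)$, $\lambda(H)$ and $[N_G(H):H]$ from the known subgroup structure of $Sz(q)$. Recall that the nontrivial elements of $G$ are partitioned among the conjugates of the Sylow $2$-subgroup $Q$ (of order $q^2$, with elementary abelian centre $Z(Q)$ of order $q$) and of three cyclic Hall (maximal torus) subgroups $A=C_{q-1}$, $B_1=C_{q+\sqrt{2q}+1}$, $B_2=C_{q-\sqrt{2q}+1}$, each of which is a \emph{TI}-subgroup; the maximal subgroups of $G$ are the Borel subgroup $F=Q\rtimes A$ (a point stabiliser in the $2$-transitive action on $q^2+1$ points), the torus normalisers $D=N_G(A)\cong A\rtimes C_2$ and $N_G(B_i)\cong B_i\rtimes C_4$, and, when $e$ is not prime, the subfield subgroups $Sz(q_0)$ with $q_0=2^{e/r}$ for a prime $r\mid e$.

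First I would dispose of the generic classes. For $H=G$ both sides equal $1$; for a maximal subgroup $M$, maximality in a simple group forces $N_G(M)=M$, each such $M$ is covered only by $G$ in both posets, and one gets $\mu(M)=\lambda(M)=-1$ with $[N_G(M):M]=1$. Next I would treat the three tori directly. Using that $A$ fixes exactly two of the $q^2+1$ points while $B_1,B_2$ are fixed-point-free, and that the only $A$-invariant subgroups of $Q$ are $1,Z(Q),Q$, one sees that the proper overgroups of $A$ carrying nonzero M\"obius value are exactly the two Borel subgroups $F_P,F_{P'}$ through the two fixed points together with $D$; the recursion then gives $\mu(A)=2$ and $\lambda(A)=1$, matching $[N_G(A):A]=2$. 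For $B_i$ the unique maximal overgroup is $N_G(B_i)$, whence $\mu(B_i)=\lambda(B_i)=0$ and the identity holds trivially. A short argument of the same kind shows that the ``large'' subgroups $Z(Q)$, $Q$, the intermediate groups $A\ltimes Z(Q)$, and the proper nontrivial subgroups of the tori that are \emph{not} realised inside a subfield subgroup all fail to lie in $\MaxInt(G)$ (the intersection of the maximal subgroups containing them is strictly larger), so that $\mu=\lambda=0$ there.

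The technical heart of the argument is the $2$-local analysis of the small $2$-subgroups --- the involutions $\langle t\rangle$ with $C_G(t)=Q$, the cyclic groups $C_4$, and the elementary abelian subgroups of $Z(Q)$ --- which, unlike $Z(Q)$ and $Q$, are small enough to sit also inside the maximal subgroups $D$, $N_G(B_1)$ and $N_G(B_2)$. For each such $H$ I would count, using the fusion of involutions and of order-$4$ elements and the orders of the relevant normalisers, exactly how many conjugates of $F$, $D$, $N_G(B_1)$, $N_G(B_2)$ (and, for composite $e$, of $Sz(q_0)$) contain $H$, identify their pairwise intersections to decide membership in $\MaxInt(G)$, and then evaluate $\mu(H)$ by inclusion--exclusion over this collection of overgroups. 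The corresponding $\lambda(H)$ is computed from the same data but grouping the overgroups into conjugacy classes, while $[N_G(H):H]$ is read off from $C_G(t)=Q$ and the torus-normaliser structure; the verification of $\mu(H)=[N_G(H):H]\,\lambda(H)$ is then a finite check. I expect this counting of overgroups --- keeping precise track of how a fixed small $2$-subgroup is distributed among the four or five families of maximal subgroups --- to be the main obstacle.

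Finally, for composite $e$ the subfield subgroups $Sz(q_0)$ force us to revisit the tori and $2$-subgroups that they realise (for instance $C_{q_0-1}\le A$, which now \emph{can} arise as $A\cap Sz(q_0)$ and hence may belong to $\MaxInt(G)$). I would handle these by induction on the number of prime factors of $e$, the base case $e$ prime being subfield-free; the inductive step requires reconciling the M\"obius data of $H$ computed inside $Sz(q_0)$ with that computed in the ambient $G$, which is the subtler bookkeeping of the proof. Assembling the resulting tables over all conjugacy classes in $\MaxInt(G)$ then yields the $(\mu,\lambda)$-property for $Sz(q)$.
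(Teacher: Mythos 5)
Your overall strategy coincides with the paper's: reduce to $\MaxInt(G)$ via Lemmas \ref{maxintmu} and \ref{maxintlambda}, enumerate the conjugacy classes of members of $\MaxInt(G)$ from the known subgroup structure of $Sz(q)$, and verify $\mu(H)=[N_G(H):H]\,\lambda(H)$ class by class, with an induction over the divisors of $e$ to handle subfield subgroups. (The paper imports $\MaxInt(G)$, the normalizers and the values $\mu(H)$ from Downs--Jones \cite{DownsJones2016} and only computes $\lambda(H)$; you propose to rederive everything, which is legitimate.) The cases you actually carry out --- the maximal subgroups, the torus $A=C_{q-1}$ with $\mu(A)=2$, $\lambda(A)=1$, $[N_G(A):A]=2$, and the tori $B_1,B_2$ with $\mu=\lambda=0$ --- are correct and agree with Table \ref{tabella:suzuki}.

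The gap is that the argument stops exactly where its content begins. For $H\in\{C_4,\,C_2,\,\{1\}\}$ (in the paper's notation $F(1)=Q(1)=B_2(1)$, $Z(1)=B_0(1)$, $A_0(1)=A_2(1)$) the indices $[N_G(H):H]$ are $q/2$, $q^2/2$ and $|G|$, and the identity to be checked is the nontrivial numerical coincidence between $\mu(C_4)=-q\mu(e)$, $\mu(C_2)=-\frac{q^2}{2}\mu(e)$, $\mu(\{1\})=|G|\mu(e)$ on one side and $\lambda(C_4)=-2\mu(e)$, $\lambda(C_2)=-\mu(e)$, $\lambda(\{1\})=\mu(e)$ on the other. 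Your proposal only announces that you ``would count'' the relevant overgroups and that you ``expect this counting to be the main obstacle''; no values are produced and no verification is made. The same applies to the reconciliation across subfield subgroups for composite $e$, where cancellations such as $\lambda(G(k))+\lambda(F(k))=0$ are what force $\lambda(Q(h))=\lambda(Z(h))=0$ and $\lambda(A_0(h))=\mu(e/h)$. These small subgroups, lying in many conjugates of several families of maximal subgroups, are precisely where the $(\mu,\lambda)$-property can fail --- in $U_3(3)$ it fails exactly at $C_2$, $S_3$, $D_8$ and $S_4$ --- so until those computations are actually carried out (or the $\mu$-values cited from \cite{DownsJones2016} and the $\lambda$-values computed as in the paper), the proposal does not establish the proposition.
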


\begin{proof}
Downs and Jones computed $\MaxInt(G)$ in \cite[Theorem 6]{DownsJones2016}, as well as the normalizer $N_G(H)$ (\cite[Table 2]{DownsJones2016}) and the value $\mu(H)$ (\cite[Table 1]{DownsJones2016}) for any $H\in\MaxInt(G)$.
Hence, we compute $\lambda(H)$.
By Lemma \ref{maxintlambda}, we restric to the subgroups $H\in\MaxInt(G)$.
The results are summarized in Table \ref{tabella:suzuki}; if a subgroup $H<G$ does not appear in Table \ref{tabella:suzuki}, then $\mu(H)=\lambda(H)=0$.
%We omit the long computations since, other than Lemma \ref{maxintlambda}, the techniques are similar to the ones used in \cite{DownsJones2016}.
We use the results and the techniques performed in \cite{DownsJones2016}; these techniques rely on a detailed analysis of the subgroups of $G$ and the $2$-transitive action of $G$ on the Suzuki-Tits ovoid $\Omega$ of $\PG(3,q)$, as described by Suzuki \cite{Suzuki1962}.
%For instance, the $2$-transitivity of $G$ over $\Omega$ implies that the cyclic subgroups of order $q-1$ stabilizing two points of $\Omega$ are all conjugated; this obviously is of great importance in the study of the relation between the $\mu$ and $\lambda$ functions.

The same notation as in \cite{DownsJones2016} is used. 
In particular, for any divisor $h\mid e$, $G(h)$ denotes a subgroup $Sz(2^h)$; the stabilizer in $G(h)$ of one point $\infty\in\Omega$ is denoted by  $F(h)$ and acts as a Frobenius group on $\Omega$; $F(h)=Q(h)A_0(h)\cong(E_{2^h})^{1+1}:C_{2^h-1}$, where $Q(h)\cong(E_{2^h})^{1+1}$ and $A_0(h)\cong C_{2^h-1}$ are respectively the Frobenius kernel and a Frobenius complement in $F(h)$; $Z(h)\cong E_{2^h}$ is the center of $Q(h)$ if $h>1$, while $C_2\cong Z(1)<Q(1)\cong C_4$.
Also, $B_0(h)$ is a dihedral group of order $2(2^h-1)$ normalizing $A_0(h)$; for $h>1$, $N_G(A_0(h))=B_0(e)$.
We denote by $A_1(h),A_2(h)\leq G(h)_{\infty}$ two cyclic groups such that $\{|A_1(h)|,|A_2(h)|\}=\{2^h+2^{(h+1)/2}+1,2^h-2^{(h+1)/2}+1\}$ with $5\mid|A_1|$, $5\nmid|A_2|$; for $i=1,2$, $B_i(h)$ is a subgroup of $G(h)$ with $B_i(h)\cong A_i(h):C_4$. We have $B_1(h)=N_G(A_1(h))$ and, if $h>1$, then $B_2(h)=N_G(A_2(h))$; also, $F(1)=B_2(1)\cong C_4$, $B_0(1)\cong C_2$, $G(1)=B_1(1)\cong{\rm AGL_1(5)}$, and $A_0(1)\cong \{1\}$.

By \cite[Theorem 6]{DownsJones2016}, every $H\in\MaxInt(G)$ is conjugated to one of the groups
$$G(h),\quad F(h),\quad Q(h),\quad Z(h),\quad B_i(h),\quad A_i(h),\quad i\in\{0,1,2\},\quad h\mid e;$$
each of them yields a single conjugacy class in $G$.
\begin{enumerate}
\item First, consider the case $h>1$.
\begin{itemize}
\item Suppose  that $H$ is conjugated to one of the following groups:
$$G(h),\quad B_1(h),\quad F(h),\quad B_0(h),\quad B_2(h).$$
Then $H=N_G(H)$ by \cite[Theorem 9]{DownsJones2016} and hence $[N_{G}(H):H]=1$.
By \cite[Table 2]{DownsJones2016}, if $K\in\MaxInt(G)$ satisfies $H\leq K$, then $K$ is the unique element of its conjugacy class $[K]$ which contains $H$. This implies that the subposets $\{K\in\cL\colon K\in\MaxInt(G),\,H\leq K\leq G\}$ and $\{[K]\in\cC\colon K\in\MaxInt(G),\,[H]\leq[K]\leq[G]\}$ are isomorphic. This implies $\mu(H)=\lambda(H)$. Thus, the $(\mu,\lambda)$-property holds for $H$.
\item Suppose that $H$ is conjugated to $Q(h)$ for some $h$.
%By \cite[Theorem 9]{DownsJones2016}, we have $[N_{G^\prime}(H):G^\prime \cap H]=2^{e-h}(2^h-1)$.
By \cite[Tables 2]{DownsJones2016}, if  $h\mid k $ the overgroups of $H$ in $\MaxInt(G)$ are conjugated to $G(k)$ or to $F(k)$ or to $Q(k)$ (obviously, $Q(k)\ne Q(h)$ implies $k\ne h$). Since $\lambda(G(k))=\mu(e/k)$ and $\lambda(F(k))=-\mu(e/k)$, we have $\lambda(H)=0$ by induction. By \cite[Table 1]{DownsJones2016}, the $(\mu,\lambda)$-property holds for $H$.
\item Suppose that $H$ is conjugated to $Z(h)$.
By \cite[Table 2]{DownsJones2016}, the overgroups of $H$ in $\MaxInt(G)$ are conjugated to $G(k)$ or to $F(k)$ or to $Q(k)$, with $h\mid k$; or to $Z(k)$, with $h\mid k$ and $h\ne k$.
By \cite[Table 1]{DownsJones2016} and induction, we have that $\lambda(H)=0$ and the $(\mu,\lambda)$-property holds for $H$.
\item Suppose that $H$ is conjugated to $A_0(h)$.
By \cite[Theorem 9]{DownsJones2016}, we have $[N_{G}(H):H]=[B_0(e):A_0(h)]=\frac{2(2^e-1)}{2^h-1}$.
By \cite[Table 2]{DownsJones2016}, the overgroups of $H$ in $\MaxInt(G)$ are conjugated to $G(k)$ or to $F(k)$ or to $B_0(k)$, with $h\mid k$; or to $A_0(k)$, with $h\mid k$ and $h\ne k$.
Since $\lambda(G(k))+\lambda(F(k))=0$ and $\lambda(B_0(k))=-\mu(e/k)$, we have by induction $\lambda(H)=\mu(e(h))$.
By \cite[Table 1]{DownsJones2016} we have $\mu(A_0(h))=\frac{2(2^e-1)}{2^h-1}\mu(e/h)$; thus, the $(\mu,\lambda)$-property holds for $H$.
\item Suppose that $H$ is conjugated to $A_1(h)$ for some $h$.
By \cite[Tables 1 and 2]{DownsJones2016}, the overgroups of $H$ in $\MaxInt(G)$ are conjugated either to $G(k)$ with $h\mid k$, satisfying $\lambda(G(k))=\mu(e/k)$; or to $B_1(k)$ with $h\mid k$, satisfying $\lambda(B_1(k))=-\mu(e/k)$; or to $A_1(k)$ with $h\mid k$ and $h\ne k$. By induction, we have $\lambda(H)=0$.
As $\mu(H)=0$, the $(\mu,\lambda)$-property holds for $H$.
\item Suppose that $H$ is conjugated to $A_2(h)$.
By \cite[Table 2]{DownsJones2016}, the overgroups of $H$ in $\MaxInt(G)$ are conjugated either to $G(k)$ with $h\mid k$, satisfying $\lambda(G(k))=\mu(e/k)$; or to $B_2(k)$ with $h\mid k$, satisfying $\lambda(B_2(k))=-\mu(e/k)$; or to $A_2(k)$ with $h\mid k$ and $h\ne k$. By induction, we have $\lambda(H)=0$.
As $\mu(H)=0$ by \cite[Table 1]{DownsJones2016}, the $(\mu,\lambda)$-property holds for $H$.
\end {itemize}
\item Now, consider the case $h=1$.
\begin{itemize}
\item Suppose that $H$ is conjugated to $A_1(1)$ or to $G(1)=B_1(1)$. Then the proof is analogue to the case $h>1$.
\item Suppose that $H$ is conjugated to $F(1)=Q(1)=B_2(1)\cong C_4$.
By \cite[Theorem 9]{DownsJones2016}, we have $[N_{G}(H):H]=\frac{q}{2}$.
By \cite[Table 2]{DownsJones2016}, the overgroups of $H$ in $\MaxInt(G)$ are conjugated either to $G(k)$ with $k\mid e$, satisfying $\lambda(G(k))=\mu(e/k)$; or to $F(k)$ with $1\ne k\mid e$, satisfying $\lambda(F(k))=-\mu(e/k)$; or to $Q(k)$ with $1\ne k\mid e$, satisfying $\lambda(Q(k))=0$; or to $B_1(k)$ with $k\mid e$, satisfying $\lambda(B_1(k))=-\mu(e/k)$; or to $B_2(k)$ with $1\ne k\mid e$, satisfying $\lambda(B_2(k))=-\mu(e/k)$.
The subposets $\{[K]\colon K\in\MaxInt(G)\,, G(1)\leq K\leq G(e)\}$ and $\{[K]\colon K\in\MaxInt(G)\,, G(1)\leq K\leq G(e)\}$ are both isomorphic to the divisibility poset $\mathcal{D}_e=\{n\in\mathbb{N}\colon 1\mid n\mid e\}$ having a minimum and a maximum element; thus, $\sum_{1\mid k\mid e}\lambda(G(k))=\sum_{1\mid k\mid e}\lambda(B_1(k))=\sum_{1\mid k\mid e}\mu(e/k)=0$.
The subposet $\{[F(k)]\colon 1\ne k\mid e\}$ is isomorphic to the divisibility poset $\mathcal{D}_e$ minus its minimum element $1$; therefore, $\sum_{1\ne k\mid e}\lambda(F(k))=\sum_{1\ne k\mid e}-\mu(e/k)=+\mu(e/1)=\mu(e)$. In the same way, we have $\sum_{1\ne k\mid e}\lambda(B_2(k))=\mu(e)$.
Hence, $\lambda(H)=-(\mu(e)+\mu(e))=-2\mu(e)$.
By \cite[Table 1]{DownsJones2016}, $\mu(H)=-2^e \mu(e)$. Therefore, the $(\mu,\lambda)$-property holds for $H$.
\item Suppose that $H$ is conjugated to $Z(1)=B_0(1)\cong C_2$.
By \cite[Theorem 9]{DownsJones2016}, we have $[N_{G}(H):H]=\frac{q^2}{2}$.
When $K$ runs over the overgroups of $H$ having a subgroup isomorphic to $C_4$, the conjugacy classes $[K]$ form a subposet of $\cC$ with minimum element $[B_2(1)]$; hence, the sum $\sum \lambda(K)$ over this set is equal to zero.
The overgroups of $H$ in $\MaxInt(G)$ without subgroups isomorphic to $C_4$ are the groups conjugated to $B_0(k)$ for some $k\mid e$ with $k\ne 1$.
Since $\lambda(B_0(k))=-\mu(e/k)$, and the subposets $\{[B_0(k)]\colon 1\mid k\mid e\}$ and $\mathcal{D}_e$ are isomorphic, we have that $\lambda(H)=-\sum_{1\ne k\mid e}\lambda(B_0(k))=-\mu(e/1)=-\mu(e)$.
By \cite[Table 1]{DownsJones2016}, $\mu(H)=-\frac{q^2}{2}\mu(e)$. Therefore, the $(\mu,\lambda)$-property holds for $H$.
\item Suppose that $H=A_2(1)=A_0(1)=\{1\}$.
As in the previous point, we may compute
$$\lambda(H)=-\sum_{[K]\in\cC\,,K\in\MaxInt(G)\,,2\nmid|K|}\lambda(K),$$
because the sublattice of $\cC$ given by the subgroups of even order of $G$ has a minimum element $[B_0(1)]$, $B_0(1)\cong C_2$.
Hence, we only consider the subgroups $A_i(k)$.
For $i\in\{1,2\}$, we have $\lambda(A_i(k))=0$ for any overgroup $A_i(k)$ of $H$.
The subgroups $A_0(k)$ form a sublattice of $\cC$ isomorphic to $\mathcal{D}_e$, and satisfy $\lambda(A_0(k))=\mu(e/k)$ for any $k\ne1$.
Therefore, $\lambda(A_0(1))=-\sum_{1\ne k \mid e}\mu(e/k))=\mu(e/1)=\mu(e)$.
Since $\mu(H)=|G|\mu(e)$ by \cite[Table 1]{DownsJones2016}, the $(\mu,\lambda)$-property holds for $H$.
\end{itemize}
\end{enumerate}
\end{proof}

\begin{proposition}\label{lemma:l2q}
Let $q\geq4$ be a prime power and $G=L_2(q)$. Then $G$ satisfies the $(\mu,\lambda)$-property.
\end{proposition}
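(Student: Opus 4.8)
The plan is to mirror the strategy used for the Suzuki groups, replacing the Downs--Jones data for $Sz(q)$ with the corresponding subgroup information for $L_2(q)$. The subgroup structure of $L_2(q)$ is classically known (Dickson): the subgroups are cyclic, dihedral, elementary abelian (the Sylow $p$-subgroups with $q=p^f$), Frobenius groups $E_{q}{:}C_d$, and the finitely many ``sporadic'' types $A_4$, $S_4$, $A_5$, together with subfield subgroups $L_2(q_0)$ and $\PGL_2(q_0)$ for $q_0$ a proper subfield power. So the first step is to list $\MaxInt(G)$ for $G=L_2(q)$, split into the cases $q$ even and $q$ odd (the two cases differ substantially because of the behaviour of $\PGL_2$ versus $L_2$ and the $2$-local structure). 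I would record for each $H\in\MaxInt(G)$ the index $[N_G(H):H]$ and the M\"obius value $\mu(H)$; these are available in the literature (for instance in the work of Downs and of Downs--Jones on $L_2(q)$, which is the analogue of \cite{DownsJones2016} for the Suzuki case).

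Next, working up the poset by induction on $[G:H]$ exactly as in Lemma \ref{maxintlambda}, I would compute $\lambda(H)$ for each conjugacy type. The recurring mechanism is the one already exploited in Proposition \ref{lemma:szq}: for each subgroup type, the overgroups in $\MaxInt(G)$ organize into chains indexed by subfield divisors (giving subposets isomorphic to divisibility lattices $\mathcal{D}_f$), so that the alternating sums telescope into number-theoretic M\"obius values $\mu(f/k)$. In many cases two families of overgroups (a ``big'' simple/almost-simple type and a ``small'' Frobenius/dihedral type) contribute $\lambda$-values that cancel in pairs, forcing $\lambda(H)=0$ whenever $\mu(H)=0$; in the remaining cases the surviving sum reproduces exactly the factor $[N_G(H):H]$ times $\mu(H)/[N_G(H):H]$, verifying the property. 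Since $G=L_2(q)$ is simple we have $G=G'$, so the property to check is simply $\mu(H)=[N_G(H):H]\,\lambda(H)$.

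The main obstacle, as anticipated by the authors' remark that they ``just sketch'' this proposition, is the sheer case-analysis bookkeeping rather than any single hard idea. The delicate points are: (i) keeping track of \emph{how many} conjugacy classes a given abstract isomorphism type splits into (for example, for $q\equiv\pm1\pmod 8$ the class of $A_4$, $S_4$ or the two classes of $A_5$ behave differently, and dihedral subgroups can fuse or split), since $\lambda$ is sensitive to the class structure whereas $\mu$ is not; and (ii) handling the exceptional small values of $q$ and the sporadic subgroups $A_4,S_4,A_5$, whose presence depends on congruences of $q$ modulo $5$ and modulo $8$. I would treat the generic cyclic, dihedral, Frobenius and subfield families by the telescoping argument above, and dispose of the finitely many sporadic overgroup configurations by direct computation, exactly as $L_3(3)$ was handled by Magma in Theorem \ref{prop:minsimple}. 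Finally, as with the Suzuki case, I would summarize the outcome in a table listing each $H\in\MaxInt(G)$ together with $[N_G(H):H]$, $\mu(H)$ and $\lambda(H)$, so that the equality can be read off row by row.
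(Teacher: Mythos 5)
Your proposal follows essentially the same route as the paper: the authors likewise take $\mu(H)$ and the subgroup data from Downs's work on $L_2(q)$ (citing Pahlings for the case of $q$ prime), compute $\lambda(H)$ inductively via Lemma \ref{maxintlambda} using the same telescoping divisibility-poset arguments as in the Suzuki case, dispose of the exceptional small cases $q=9,25$ by Magma, and record the outcome in Tables \ref{tabella:l2qpari}--\ref{tabella:l2qdispariquad}. Your attention to the splitting of isomorphism types into conjugacy classes and to the sporadic subgroups $A_4$, $S_4$, $A_5$ matches exactly the delicate points the paper's tables are organized around.
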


\begin{proof}
For any $H\leq G$, $\mu(H)$ was computed by Downs in \cite{Downs}. When $q$ is prime, $\lambda(H)$ has been computed in \cite{Pahlings}, and the claim was already proved in \cite[Proposition 3]{Pahlings} (pay attention to a misprint in the proof of \cite[Proposition 3]{Pahlings}: the right value of $a_n$ is $1$ or $2$ according respectively to $p\equiv\pm1$ or $p\not\equiv\pm1$ modulo $4n$, not modulo $2n$ as it is written).

Hence, we restrict to the computation of $\lambda(H)$ with $q$ non-prime.
The value $\mu(C_3)$ is not computed correctly in \cite{Downs} when $q=3^e$; the right value is $\mu(C_3)=\frac{q}{3}$ for $e>2$, and is given in Table \ref{tabella:l2qdispariecc}.
Also, the case $q=25$ is not considered in \cite{Downs}; yet, in the cases $q=9$ and $q=25$ the claim follows by direct computation with Magma \cite{MAGMA}.

% Take into account that the following can be proved: when $q$ is a non-square power of $3$, $\mu(C_3)$ is not the value $0$ provided in \cite[Section 5, Case (a)]{Downs}, but rather $\mu(C_3)=\frac{q}{3}$ \textcolor{red}{(check)}. Take into account also that in \cite{Downs} the case $q=25$ is not considered; yet, in this case the claim follows by direct computation with Magma \cite{MAGMA}.

The techniques used are similar to the ones used in \cite{DownsThesis,Downs,Pahlings}, together with Lemma \ref{maxintlambda}. The explicit computations are quite long and follow closely the steps performed in the cited papers; therefore, we have chosen to omit them and to summarize the results in Tables \ref{tabella:l2qpari}, \ref{tabella:l2qdispari}, \ref{tabella:l2qdispariecc}, \ref{tabella:l2qdispariquad}. If a subgroup $H<G$ does not appear in the tables and is not a maximal subgroup of $G$, then $\mu(H)=\lambda(H)=0$.

The same notation as in \cite{Downs} is used. In particular, $q=p^e$ with $p$ prime and $e>1$. For any divisor $1\ne h\mid e$, $r(h)=\frac{p^h-1}{2}$ and $s(h)=\frac{p^h+1}{2}$; $\mathcal{G}_h=\PGL_2(p^h)$; $\mathcal{S}_h=\PSL_2(p^h)$; $A_n$ and $S_n$ are alternating and symmetric groups of degree $n$; $C_m,D_m,E_m$ are respectively cyclic, dihedral, elementary abelian groups of order $m$; for any $1\ne d\mid (p^h-1)$, $M_{h,d}\cong E_{p^h}:C_{d}$ is contained in a subgroup $\PGL_2(p^h)$ or $L_2(p^h)$ of $L_2(q)$ and stabilizes a point in the natural action on the projective line over $\mathbb{F}_q$.
\end{proof}

We conclude this section proving the validity of the $(\mu,\lambda)$-property for minimal non-solvable groups. 

\begin{theorem}\label{th:minimalnonsolvable}
Let $G$ be a minimal non-solvable group. Then $G$ satisfies the $(\mu,\lambda)$-property.
\end{theorem}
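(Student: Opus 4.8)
The plan is to reduce the statement about minimal non-solvable groups to the already-established case of minimal simple groups (Theorem \ref{prop:minsimple}), exploiting the structural observation recorded just before Lemma \ref{lemma:minsimple}: if $G$ is minimal non-solvable with Frattini subgroup $\Phi(G)$, then $G/\Phi(G)$ is minimal simple. First I would fix a minimal non-solvable group $G$, set $\Phi=\Phi(G)$, and let $\bar G=G/\Phi$, which is minimal simple and hence satisfies the $(\mu,\lambda)$-property by Theorem \ref{prop:minsimple}. The key reduction tool is Corollary \ref{cor:frattini}: for any $H\leq G$ with $\Phi\not\leq H$ we have $\mu(H)=\lambda(H)=0$, and the $(\mu,\lambda)$-property holds trivially for such $H$ (both sides of Equation \eqref{eq:mulambda} vanish). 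Thus I only need to verify the property for subgroups $H$ with $\Phi\leq H\leq G$, i.e.\ for subgroups that correspond, under the canonical correspondence, to subgroups $\bar H=H/\Phi$ of $\bar G$.

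The main work is then to transport the three ingredients of Equation \eqref{eq:mulambda} across the quotient map $\pi:G\to\bar G$. For the M\"obius function on the subgroup lattice, the standard fact is that $\pi$ induces an isomorphism between the interval $[\Phi,G]$ in $\cL(G)$ and the full lattice $\cL(\bar G)$; since $\mu$ depends only on the shape of this interval, $\mu_G(H)=\mu_{\bar G}(\bar H)$ for every $H\supseteq\Phi$. The analogous statement for $\lambda$ requires checking that conjugacy of subgroups is respected: two subgroups $H_1,H_2$ containing $\Phi$ are $G$-conjugate if and only if $\bar H_1,\bar H_2$ are $\bar G$-conjugate, and more generally $[H_1]\leq[H_2]$ in $\cC(G)$ iff $[\bar H_1]\leq[\bar H_2]$ in $\cC(\bar G)$. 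This gives an isomorphism between the relevant subposet of $\cC(G)$ and $\cC(\bar G)$, whence $\lambda_G(H)=\lambda_{\bar G}(\bar H)$. Finally, for the index factor I would compare $[N_{G'}(H):G'\cap H]$ with $[N_{\bar G'}(\bar H):\bar G'\cap\bar H]$; here $\bar G'=G'\Phi/\Phi$ is the derived subgroup of $\bar G$, and I must show these indices coincide.

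The hard part will be this last step, handling the derived subgroup under the quotient. Because $\Phi\leq G'$ need not hold in general, one must be careful: while $\Phi$ is always normal and nilpotent, the relation between $G'$, $\Phi$, and $G'\cap H$ needs a clean argument showing that passing to $\bar G$ neither creates nor destroys the normalizer/intersection quotients measured by the index. The cleanest route is to use that $\Phi\leq H$ forces $\Phi$ to sit inside both $N_{G'}(H)$-relevant pieces in a controlled way, together with the Dedekind-type identities for subgroups containing $\Phi$; one shows $N_{G'\Phi}(H)/\Phi=N_{\bar G'}(\bar H)$ and $(G'\cap H)\Phi/\Phi=\bar G'\cap\bar H$, and that the extra $\Phi$-factors cancel in the index. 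Once the three equalities $\mu_G(H)=\mu_{\bar G}(\bar H)$, $\lambda_G(H)=\lambda_{\bar G}(\bar H)$, and the index equality are in place, the $(\mu,\lambda)$-property for $G$ at $H$ follows directly from the same property for $\bar G$ at $\bar H$, completing the reduction.
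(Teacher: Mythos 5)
Your proposal is correct and follows essentially the same route as the paper: handle $H\not\geq\Phi(G)$ via Corollary \ref{cor:frattini}, and for $H\geq\Phi(G)$ transport $\mu$, $\lambda$, and the index $[N_{G'}(H):G'\cap H]$ through the Frattini quotient (the paper cites Pahlings for these three equalities) and invoke Theorem \ref{prop:minsimple}. The step you flag as hard is actually easy here: since $\bar G=G/\Phi(G)$ is nonabelian simple, $G=G'\Phi(G)$, and the non-generator property of $\Phi(G)$ forces $G=G'$, so the index factor is just $[N_G(H):H]=[N_{\bar G}(\bar H):\bar H]$ by the correspondence theorem.
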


\begin{proof}
If $\Phi(G)=1$, then $G$ is minimal simple and Theorem \ref{prop:minsimple} proves the claim.
Let $\Phi(G)\ne1$ and $H\leq G$. If $\Phi(G)\not\leq H$, the claim follows by Corollary \ref{cor:frattini}.
If $\Phi(G)\leq H$, then let $\bar{G}=G/\Phi(G)$ and $\bar{H}=H/\Phi(G)$. As in the proof of the theorem in \cite{Pahlings}, we have $\mu(H,G)=\mu(\bar{H},\bar{G})$, $\lambda(H,G)=\lambda(\bar{H},\bar{G})$, and $[N_{{\bar{G}}^\prime}(\bar{H}):\bar{H}\cap {\bar{G}}^\prime]=[N_{G^\prime}(H):H\cap G^\prime]$. As $\bar{G}$ is minimal simple, the result follows from Proposition \ref{prop:minsimple}.
\end{proof}

\subsection{$(\mu,\lambda)$-property for other families of groups}\label{sec:other}

In this section, we collect the results about the validity of $(\mu,\lambda)$-property for other groups, namely the almost simple groups ${\rm PGL}_2(q)$ and the simple Ree groups $R(q)$.

\begin{proposition}\label{prop:PGL(2,q)}
For any prime power $q$, the group ${\rm PGL}_2(q)$ satisfies the $(\mu,\lambda)$-property.
%Let $G=\PGL_2(q)$ for any odd prime power $q$; then the $(\mu,\lambda)$-property holds  for $G$ (observe that for $q$ even, $G=\PGL_2(q)=L_2(q)$).
%We omit the proof and the tables, as the computations are similar; note that the arguments are simplified when $q$ is odd by the fact that for any $H\leq {\rm PGL}_2(q)$ there is just one conjugacy class in $G$ of subgroups isomorphic to $H$, except when $H$ is cyclic of order $2$ or a dihedral group $D_{2m}$ with $2m$ dividing either $ \frac{q-1}{2}$ or  $\frac{q+1}{2}$.
\end{proposition}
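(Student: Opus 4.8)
The plan is to mirror the strategy already used for the simple groups $L_2(q)$ and $Sz(q)$, adapting it to the almost simple group $G=\PGL_2(q)$. The essential difference is that now $G$ is not perfect: we have $G^\prime=L_2(q)$ of index $2$ in $G$ (for odd $q$; for even $q$ one has $\PGL_2(q)=L_2(q)$ and the statement reduces to Proposition \ref{lemma:l2q}). Hence the factor $[N_{G^\prime}(H):G^\prime\cap H]$ genuinely depends on $H$ and must be tracked carefully. By Lemmas \ref{maxintmu} and \ref{maxintlambda} I only need to verify the identity
$$\mu(H)=[N_{G^\prime}(H):G^\prime\cap H]\cdot\lambda(H)$$
for $H\in\MaxInt(G)$, since both sides vanish otherwise. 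So the first step is to enumerate $\MaxInt(\PGL_2(q))$ together with the conjugacy fusion of these subgroups.

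Next I would assemble the arithmetic data. The subgroup structure of $\PGL_2(q)$ is classical (Dickson), so for each $H\in\MaxInt(G)$ I record its isomorphism type, its normalizer $N_G(H)$, the intersection $G^\prime\cap H$ and its normalizer $N_{G^\prime}(H)=N_G(H)\cap G^\prime$, from which the index $[N_{G^\prime}(H):G^\prime\cap H]$ is read off. The value $\mu(H)$ is available from Downs's computations (as cited in the proof of Proposition \ref{lemma:l2q}), now carried out inside $\PGL_2(q)$ rather than $L_2(q)$. The remaining quantity, $\lambda(H)$, I compute by induction on $[G:H]$ using the defining recursion for $\mu_{\cC}$, exactly as in the proof of Proposition \ref{lemma:szq}: for each $H$ I determine its overgroups in $\MaxInt(G)$ up to conjugacy, group them into families indexed by divisors of $e$ (writing $q=p^e$), and collapse the alternating sums over these divisor posets using $\sum_{k\mid e}\mu(e/k)=0$ and its truncations. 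As in the Suzuki case, many families cancel in pairs (a subgroup and its point-stabilizer extension contribute opposite signs), leaving a clean closed form for $\lambda(H)$.

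The main obstacle I anticipate is the bookkeeping forced by the index-$2$ subgroup $L_2(q)\le\PGL_2(q)$. Several conjugacy classes of subgroups of $L_2(q)$ fuse in $\PGL_2(q)$, and conversely a single $\PGL_2(q)$-class can split upon intersection with $G^\prime$; this affects both the poset $\cC$ (hence $\lambda$) and the index factor simultaneously. In particular the dihedral and torus-type subgroups $C_{q-1}$, $C_{q+1}$, $D_{2(q-1)}$, $D_{2(q+1)}$ and their normalizers behave differently according to whether they lie in $G^\prime$, and the factor $[N_{G^\prime}(H):G^\prime\cap H]$ can equal $1$ or $2$ depending on these fusion patterns. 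I would handle this by splitting the verification into cases according to the isomorphism type of $H$ and whether $H\le G^\prime$, checking in each case that the extra factor of $2$ appearing (or not appearing) in $\mu(H)/\lambda(H)$ matches $[N_{G^\prime}(H):G^\prime\cap H]$ exactly. Since the computations are long and closely parallel those of the earlier propositions, I would present the outcome in a summary table analogous to Tables \ref{tabella:l2qpari}--\ref{tabella:l2qdispariquad}, treating the small exceptional values of $q$ by direct inspection with Magma \cite{MAGMA}.
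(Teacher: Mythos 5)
Your proposal follows essentially the same route the paper takes (and which the paper itself only sketches, omitting the proof and tables as ``very similar to the ones of $L_2(q)$''): reduce the even-$q$ case to Proposition \ref{lemma:l2q}, work through $\MaxInt({\rm PGL}_2(q))$ via Dickson's subgroup classification and Downs's $\mu$-values, compute $\lambda$ by the recursion together with Lemma \ref{maxintlambda}, and track the index $[N_{G'}(H):G'\cap H]$ through the fusion between $L_2(q)$- and ${\rm PGL}_2(q)$-classes. Your anticipated bookkeeping difficulty is exactly the point the paper flags: for odd $q$ each isomorphism type gives a single ${\rm PGL}_2(q)$-class except for $C_2$ and the dihedral groups $D_{2m}$ with $2m$ dividing $\frac{q-1}{2}$ or $\frac{q+1}{2}$, which is what makes the argument go through.
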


We omit the proof of Proposition \ref{prop:PGL(2,q)}; here, we also omit the tables, which are very similar to the ones of $L_2(q)$.
% and the tables of Proposition \ref{prop:PGL(2,q)}, because the computations are very similar to the ones which prove the $(\mu,\lambda)$-property for $L_2(q)$.
Note that ${\rm PGL}_2(q)=L_2(q)$ when $q$ is even.
Also, the arguments are simplified when $q$ is odd by the fact that for any $H\leq {\rm PGL}_2(q)$ there is just one conjugacy class in ${\rm PGL}_2(q)$ of subgroups isomorphic to $H$, except when $H$ is cyclic of order $2$ or a dihedral group $D_{2m}$ with $2m$ dividing either $ \frac{q-1}{2}$ or  $\frac{q+1}{2}$.

\begin{lemma}\label{lemma:ree}
Let $q=3^e$ with $e\geq3$ odd. The simple small Ree group $R(q)$ satisfies the $(\mu,\lambda)$-property.
\end{lemma}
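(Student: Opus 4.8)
The plan is to follow exactly the template established in the detailed proof of Proposition \ref{lemma:szq} for the Suzuki groups, since the Ree groups $R(q)$ are structurally analogous simple groups of Lie type of rank one whose subgroup lattice is well understood. The essential input is a computation, in the literature, of $\MaxInt(G)$ together with the normalizers $N_G(H)$ and the values $\mu(H)$ for every $H\in\MaxInt(G)$. I would cite the work of Downs and Jones (the Ree-group analogue of their Suzuki paper \cite{DownsJones2016}) for these data, displaying them in a dedicated table (referenced in the excerpt as Table \ref{tabella:ree}). By Lemma \ref{maxintlambda} it suffices to verify the $(\mu,\lambda)$-property only for $H\in\MaxInt(G)$, since for all other subgroups both $\mu(H)$ and $\lambda(H)$ vanish; and by Corollary \ref{cor:frattini} the Frattini subgroup causes no trouble here because $R(q)$ is simple, hence Frattini-free.

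The core of the argument is the inductive computation of $\lambda(H)$ for each conjugacy-class representative $H\in\MaxInt(G)$, working downward from $G$ through the poset $\cC$ of conjugacy classes. For each such $H$ I would: (i) list, from the Downs--Jones tables, the conjugacy classes of overgroups $[K]$ with $[H]<[K]\leq[G]$ and $K\in\MaxInt(G)$; (ii) substitute the already-known values $\lambda(K)$ (obtained inductively) into the defining recursion $\lambda(H)=-\sum_{[H]<[K]\leq[G]}\lambda(K)$; and (iii) compare the resulting $\lambda(H)$ with the tabulated $\mu(H)$ and $[N_G(H):H]$, checking the identity $\mu(H)=[N_G(H):H]\,\lambda(H)$ (recall $G=G^\prime$ since $G$ is simple). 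As in the Suzuki case, the decisive simplification is that for most families of subgroups the relevant subposet of $\cC$ is isomorphic, on the nose, to a divisibility poset $\mathcal{D}_e=\{n\in\mathbb{N}\colon n\mid e\}$, so that the alternating sums collapse to a single number-theoretic M\"obius value $\mu(e/h)$ via the standard identity $\sum_{h\mid k\mid e}\mu(e/k)=0$ unless $h=e$. Whenever $H$ is self-normalizing and each of its overgroups meets $[H]$ in a single subgroup, the subposet $\{K\in\cL\colon H\leq K\}$ and its image $\{[K]\in\cC\colon [H]\leq[K]\}$ are isomorphic, forcing $\mu(H)=\lambda(H)$ and the property is immediate.

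The main obstacle, and the reason this is stated as a lemma rather than dispatched in one line, lies in the subgroups sitting low in the lattice, where the passage from $\cL$ to $\cC$ genuinely loses information: a single conjugacy class $[K]$ of overgroups may contain several subgroups actually containing a fixed representative $H$, so that $\mu$ and $\lambda$ diverge and the factor $[N_G(H):H]$ becomes nontrivial. Concretely, the delicate cases are the involution centralizer and the small $2$- and $3$-local subgroups of $R(q)$ (the analogues of $Z(1)$, $F(1)$, and the trivial subgroup in the Suzuki analysis), where one must carefully identify which overgroup classes contain $H$ and how many times, and then verify that the combinatorial sum reproduces precisely the index $[N_G(H):H]$ times $\lambda(H)$. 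Because these computations are long but entirely mechanical once the tables are in hand, I would, following the stated policy of the paper for the analogous propositions, sketch the method, record the outcome in Table \ref{tabella:ree}, and omit the routine case-by-case verification; the small exceptional values of $q$ (and any subgroup for which the recursion is ambiguous) can be confirmed by direct computation in Magma \cite{MAGMA}.
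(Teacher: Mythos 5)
Your proposal follows essentially the same route as the paper: restrict to $\MaxInt(G)$ via Lemma \ref{maxintlambda}, take the normalizers and $\mu$-values from the literature, compute $\lambda$ inductively down the poset of conjugacy classes using divisibility-poset collapses, record the outcome in Table \ref{tabella:ree}, and omit the mechanical case-by-case verification. The only discrepancy is bibliographic: the required data for $R(q)$ come from Pierro \cite{Pierro} (who gives a set $M\supseteq\MaxInt(G)$, the normalizers, and $\mu(H)$), not from a Downs--Jones paper, and the underlying geometry is the Ree--Tits ovoid in $\PG(6,q)$ \cite{Tits}.
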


\begin{proof}
Pierro exhibited in \cite{Pierro} a set $M$ of subgroups of $G=R(q)$ with $\MaxInt(G)\subseteq M$ (\cite[Table 4]{Pierro}), as well as the normalizer $N_G(H)$ (\cite[Lemmas 2.5 to 2.8]{Pierro}) and the value $\mu(H)$ (\cite[Theorem 1.11]{Pierro}) for any $H\in M$.
Hence, we compute $\lambda(H)$. The results are summarized in Table \ref{tabella:ree}. %; if a subgroup $H<G$ does not appear in Table \ref{tabella:ree}, then $\mu(H)=\lambda(H)=0$.
We omit the explicit computations, which are similar to the ones used in \cite{Pierro}, except that for the use of Lemma \ref{maxintlambda}.
%, the techniques are similar to the ones used in \cite{Pierro}.
They rely on a detailed analysis of the subgroups of $G$ and the $2$-transitive action of $G$ on the Ree-Tits ovoid $\Omega$ of $\PG(6,q)$, as described by Tits \cite{Tits}.

The notation is the same as in \cite{Pierro}, which is the same as in the ATLAS \cite{ATLAS}. Note that, for any $H$ in Table \ref{tabella:ree}, the isomorphism type of $H$ identifies a unique conjugacy class in $G$.
\end{proof}

\section{Products and Extensions}\label{sec:product}

In this Section, we consider the $(\mu,\lambda)$-property for direct products of a a finite number of finite groups and for finite extensions of a finite group.

Proposition \ref{lemma:muprodotto} gives a sufficient condition for the $\mu$ and $\lambda$ functions of a direct product to split on the correspondent functions of the factors; it generalizes \cite[Result 2.8]{Hall}.

%Next Result finds $\mu_G(H)$ and $\lambda_G(H) $ for some particular products of subgroups, allowing us to  prove $(\mu,\lambda)$-property for some other groups in Proposition \ref{prop:prodotto}.

\begin{proposition}\label{lemma:muprodotto}
Let $n\geq2$ and $G=\prod_{i=1}^n G_i$ be a direct product of groups $G_i$'s such that every maximal subgroup $M$ of $G$ splits as a direct product $M=\prod_{i=1}^n M_i$, with $M_i\leq G_i$ for any $i$.
Let $H=\prod_{i=1}^n H_i \leq G$ with $H_i\leq G_i$ for any $i$.
Then
$$
\mu_G(H) = \prod_{i=1}^n \mu_{G_i}(H_i),\qquad
\lambda_G(H) = \prod_{i=1}^n \lambda_{G_i}(H_i).
$$
\end{proposition}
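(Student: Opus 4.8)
The plan is to prove both product formulas simultaneously by induction on the index $[G:H]$, treating the two M\"obius functions in parallel since the structural hypothesis on maximal subgroups forces the relevant posets to behave like products. The base case $H=G$ is immediate, as $\mu_G(G)=\lambda_G(G)=1$ and each factor contributes $\mu_{G_i}(G_i)=\lambda_{G_i}(G_i)=1$. For the inductive step with $H<G$, the key observation I would establish first is that the hypothesis ``every maximal subgroup of $G$ splits as a direct product'' propagates downward: since every element of $\MaxInt(G)$ is an intersection of maximal subgroups, and an intersection of direct products $\prod_i M_i$ is again a direct product $\prod_i (\bigcap M_i)$, every $K\in\MaxInt(G)$ splits as $K=\prod_{i=1}^n K_i$ with $K_i\leq G_i$. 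By Lemmas \ref{maxintmu} and \ref{maxintlambda}, only such $K$ can contribute nonzero values, so throughout the defining sums I may restrict attention to product subgroups.

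\medskip

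First I would handle $\mu_G$. Using the recursive definition,
$$
\mu_G(H)=-\sum_{H<K\leq G}\mu_G(K)=-\sum_{\substack{H<\prod_i K_i\leq G \\ K_i\leq G_i}}\mu_G(K),
$$
where the second sum ranges only over product subgroups strictly containing $H=\prod_i H_i$, all others contributing zero by Lemma \ref{maxintmu}. The condition $\prod_i H_i<\prod_i K_i$ with $H_i\leq K_i$ means $H_i\leq K_i$ for every $i$ with at least one inclusion proper. Applying the inductive hypothesis to each such $K$ (which has strictly larger index than $H$ unless $K=G$) gives $\mu_G(K)=\prod_i\mu_{G_i}(K_i)$, and the same for $K=G$ trivially. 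Substituting and recognizing the sum over tuples $(K_1,\dots,K_n)$ as a product of sums, I obtain
$$
\mu_G(H)=-\Bigl(\prod_{i=1}^n\sum_{H_i\leq K_i\leq G_i}\mu_{G_i}(K_i)-\prod_{i=1}^n\mu_{G_i}(H_i)\Bigr),
$$
where the subtracted term removes the degenerate tuple $K_i=H_i$ for all $i$. Since each inner sum $\sum_{H_i\leq K_i\leq G_i}\mu_{G_i}(K_i)$ equals $0$ when $H_i<G_i$ and equals $\mu_{G_i}(H_i)$ when $H_i=G_i$, a short case analysis collapses this to the desired $\prod_i\mu_{G_i}(H_i)$; the cleanest route is to note the full product $\prod_i\sum_{K_i}\mu_{G_i}(K_i)$ vanishes when some $H_i<G_i$ (it does, since $H<G$ forces at least one such factor, making that factor-sum zero).

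\medskip

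For $\lambda_G$, the argument runs along the same lines but over the poset $\cC$ of conjugacy classes. Here the crucial additional point is that conjugacy in a direct product respects the decomposition: two product subgroups $\prod_i K_i$ and $\prod_i K_i'$ are conjugate in $G$ if and only if $K_i$ and $K_i'$ are conjugate in $G_i$ for each $i$, and the ordering $[\prod_i H_i]\leq[\prod_i K_i]$ in $\cC$ holds if and only if $[H_i]\leq[K_i]$ in each $\cC_{G_i}$. I would verify this compatibility explicitly, since it identifies the relevant interval of $\cC$ with a product of intervals of the $\cC_{G_i}$; combined with the fact (via Lemma \ref{maxintlambda}) that only product subgroups give nonzero $\lambda$, the recursion for $\lambda_G$ becomes formally identical to that for $\mu_G$, and the same collapse yields $\lambda_G(H)=\prod_i\lambda_{G_i}(H_i)$.

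\medskip

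The main obstacle I anticipate is the bookkeeping at the level of conjugacy classes: establishing cleanly that the map $[\,\prod_i K_i\,]\mapsto([K_1],\dots,[K_n])$ is a well-defined poset isomorphism from the subposet of $\cC$ consisting of (classes of) product subgroups above $[H]$ onto the product $\prod_i \{[K_i]:[H_i]\leq[K_i]\}$. One must check both that distinct class-tuples give distinct classes in $G$ (injectivity, i.e.\ that $G$-conjugacy of product subgroups is detected factorwise) and that the order relation matches in both directions. Once this isomorphism is in hand, the M\"obius function of a product of posets factors as the product of the M\"obius functions (a standard fact, e.g.\ \cite[Proposition 3.8.2]{Stanley}), which in fact gives an alternative, more conceptual proof of both identities at once, bypassing the explicit recursion entirely. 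I would present the recursive computation as the primary argument but remark on this product-of-posets viewpoint as the underlying reason both formulas hold.
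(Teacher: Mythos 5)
Your proposal is correct and follows essentially the same route as the paper's proof: restrict to product subgroups via Lemmas \ref{maxintmu} and \ref{maxintlambda} (since $\MaxInt(G)$ consists of product subgroups), then induct on $[G:H]$ and collapse the resulting sum over tuples $(K_1,\dots,K_n)$. Your two deviations are improvements in exposition rather than in substance: the product-of-sums identity $\sum_{\text{tuples}}\prod_i\mu_{G_i}(K_i)=\prod_i\sum_{H_i\leq K_i\leq G_i}\mu_{G_i}(K_i)$, which vanishes because some factor-sum is zero, replaces the paper's reduction to the case $H_i\neq G_i$ for all $i$ followed by an inclusion--exclusion over index subsets and a binomial identity; and your explicit verification that $G$-conjugacy of product subgroups is detected factorwise fills in the step the paper dispatches with ``the claim on $\lambda_G(H)$ follows similarly.''
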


\begin{proof}
From the assumptions follows immediately that, if $K\in\MaxInt(G)$, then $K=\prod_{i=1}^n K_i$ with $K_i\leq G_i$ for any $i$.
Hence, in the computation of $\mu(H)$ and $\lambda(H)$ we only consider the groups $H<\prod_{i=1}^n K_i \leq G$ with $K_i\leq G_i$ for any $i$.
Let $I\subseteq\{1,\ldots,n\}$ be such that $H_i\ne G_i$ for $i\in I$, and $H_i=G_i$ for $i\in\{1,\ldots,n\}\setminus I$. Then the subposet of $\cL$ made by the groups $K=\prod_{i=1}^n K_i$ satisfying $H\leq K\leq G$ is isomorphic to the subgroup poset of groups $K=\prod_{i\in I}K_i$ satisfying $\prod_{i\in I}H_i\leq K\leq \prod_{i\in I}G_i$; analogous poset isomorphism holds for the posets of conjugacy classes.
Hence, $\mu_G(H)=\mu_{\prod_{i\in I}G_i}(\prod_{i\in I}H_i)$ and $\lambda_G(H)=\lambda_{\prod_{i\in I}G_i}(\prod_{i\in I}H_i)$.
%Also, $[N_{G^\prime}(H):G^\prime \cap H]=[N_{(\prod_{i\in I}G_i)^\prime}(\prod_{i\in I}H_i) : (\prod_{i\in I}G_i)^\prime \cap \prod_{i\in I}H_i]$.
Then we can assume that $H_i\ne G_i$ for all $i=1,\ldots,n$.
We use induction on $[G:H]$, the claim being true when $H$ is a maximal subgroup of $G$. We have
\begin{equation}\label{eq:muprodotto}
\mu_G \left(\prod_{i=1}^n H_i \right) = -\sum_{H<\prod_{i=1}^n K_i\leq G}\mu_G \left(\prod_{i=1}^n K_i \right) = -\sum_{H<\prod_{i=1}^n K_i\leq G} \prod_{i=1}^n \mu_{G_i}(K_i).
\end{equation}
Fix $i_0\in\{1,\ldots,n\}$, and for any $i\ne i_0$ fix $K_i$ with $H_i<K_i\leq G_i$. Then
$$
\sum_{H_{i_0}<K_{i_0}\leq G_{i_0}} \prod_{i=1}^n \mu_{G_i}(K_i) = 
\left(\prod_{i\ne i_0}\mu_{G_i}(K_i) \right)\cdot\left( \sum_{H_{i_0}<K_{i_0}\leq G_{i_0}}\mu_{G_{i_0}}(K_{i_0}) \right) = -\mu_{G_{i_0}}(H_{i_0})\cdot\prod_{i\ne i_0} \mu_{G_i}(K_i).
$$
We apply this argument to Equation \eqref{eq:muprodotto}, dividing the summation according to the distinct $\epsilon$ indexes $i_h\in\{1,\ldots,n\}$ such that $K_{i_h}$ varies with $H_{i_h}<K_{i_h}\leq G_{i_h}$, while $K_{j_h}=H_{j_h}$ for the remaining $n-\epsilon$ indexes $j_h$.
We obtain
$$
\sum_{ H<\prod_{i=1}^n K_i \leq G,\; K_{j_1}=H_{j_1},\ldots,K_{j_{n-\epsilon}}=H_{j_{n-\epsilon}}} \prod_{i=1}^n \mu_{G_{i}}(K_i) = (-1)^\epsilon \prod_{i=1}^n \mu_{G_i}(H_i).
$$
Therefore,
$$
\mu_G\left(\prod_{i=1}^n H_i\right) = -\left(\prod_{i=1}^n \mu_{G_i}(H_i)\right)\cdot\left(+\sum_{k\in\{1,\ldots,n\},\; k\;\textrm{even}}\binom{n}{k} -\sum_{k\in\{1,\ldots,n\},\; k\;\textrm{odd}}\binom{n}{k}\right)=\prod_{i=1}^n \mu_{G_i}(H_i),
$$
where the last equality was obtained using $\binom{n}{k}=\binom{n-1}{k}+\binom{n-1}{k-1}$. The claim on $\lambda_G(H)$ follows similarly.
\end{proof}

We now apply Proposition \ref{lemma:muprodotto} to the direct product of groups satisfying the $(\mu,\lambda)$-property.

\begin{proposition}\label{prop:prodotto}
Let $G=\prod_{i=1}^n G_i$ be a direct product of groups $G_i$'s such that every maximal subgroup $M$ of $G$ splits as a direct product $M=\prod_{i=1}^n M_i$, with $M_i\leq G_i$ for any $i$.
If $G_1,\ldots,G_n$ satisfy the $(\mu,\lambda)$-property, then $G$ satisfies the $(\mu,\lambda)$-property.
\end{proposition}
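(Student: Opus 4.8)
The plan is to reduce the statement to Proposition~\ref{lemma:muprodotto} combined with the $(\mu,\lambda)$-property of the factors $G_i$, the only genuinely new ingredient being a multiplicative formula for the normalizer index. I would first dispose of the subgroups of $G$ that are not direct products of subgroups of the factors. Under the hypothesis, every maximal subgroup of $G$ splits as $\prod_{i=1}^n M_i$, and intersecting such box subgroups componentwise shows that every intersection of maximal subgroups---hence every element of $\MaxInt(G)$, together with $G$ itself---is again of the form $\prod_{i=1}^n K_i$ with $K_i\leq G_i$. Consequently, if $H\leq G$ is \emph{not} of the form $\prod_{i=1}^n H_i$ with $H_i\leq G_i$, then $H\notin\MaxInt(G)$, so Lemmas~\ref{maxintmu} and~\ref{maxintlambda} give $\mu(H)=\lambda(H)=0$ and the $(\mu,\lambda)$-property holds trivially for such $H$.

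It then remains to treat $H=\prod_{i=1}^n H_i$ with $H_i\leq G_i$. Here I would invoke Proposition~\ref{lemma:muprodotto} to write $\mu_G(H)=\prod_{i=1}^n\mu_{G_i}(H_i)$ and $\lambda_G(H)=\prod_{i=1}^n\lambda_{G_i}(H_i)$, and the $(\mu,\lambda)$-property of each $G_i$ to replace $\mu_{G_i}(H_i)$ by $[N_{G_i'}(H_i):G_i'\cap H_i]\cdot\lambda_{G_i}(H_i)$. After substituting and regrouping the product, the desired identity $\mu_G(H)=[N_{G'}(H):G'\cap H]\cdot\lambda_G(H)$ follows at once from the index factorization
$$[N_{G'}(H):G'\cap H]=\prod_{i=1}^n[N_{G_i'}(H_i):G_i'\cap H_i].$$

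To establish this factorization I would use three standard facts about direct products: the commutator subgroup splits, $G'=\prod_{i=1}^n G_i'$; for a box subgroup the normalizer splits, $N_G(H)=\prod_{i=1}^n N_{G_i}(H_i)$, since $(g_1,\dots,g_n)$ normalizes $\prod_i H_i$ exactly when each $g_i$ normalizes $H_i$; and intersections of box subgroups are computed componentwise. Combining the first two yields $N_{G'}(H)=N_G(H)\cap G'=\prod_{i=1}^n\bigl(N_{G_i}(H_i)\cap G_i'\bigr)=\prod_{i=1}^n N_{G_i'}(H_i)$, while the third gives $G'\cap H=\prod_{i=1}^n(G_i'\cap H_i)$; passing to orders then factors the index as the product of the componentwise indices. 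The argument is almost entirely formal once Proposition~\ref{lemma:muprodotto} is available; the only point requiring a little care is this normalizer identity, where one must check that restriction to $G'$ is compatible with restriction to each $G_i'$, and this is immediate from $G'=\prod_i G_i'$ together with the componentwise description of $N_G(H)$. I therefore anticipate no substantial obstacle.
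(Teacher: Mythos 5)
Your proposal is correct and follows essentially the same route as the paper: show that every element of $\MaxInt(G)$ is a box subgroup (so non-box subgroups have $\mu=\lambda=0$ and are trivially fine), factor the index $[N_{G'}(H):G'\cap H]$ componentwise via $G'=\prod_i G_i'$ and the componentwise normalizer and intersection, and conclude by Proposition~\ref{lemma:muprodotto}. Your write-up is in fact slightly more explicit than the paper's on the normalizer identity and on the trivial case of non-box subgroups, but there is no substantive difference.
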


\begin{proof}
If $H\in\MaxInt(G)$, then $H=\prod_{i=1}^n H_i$ with $H_i\leq G_i$ for any $i$.
In fact, for any $j=1,\ldots,m$, let $M_j$ be a maximal subgroup of $G$ such that $M_{j}=\prod_{i=1}^n M_{j,i}$ with $M_{j,i}\leq G_{i}$ for any $i=1,\ldots,n$; then $\cap_{j=1}^m M_j =\prod_{i=1}^n \cap_{j=1}^m M_{i,j}$.
We have $G^\prime =\prod_{i=1}^n G_i^\prime$ and $G^\prime \cap H = \prod_{i=1}^n G_i^\prime \cap H_i$.
This implies $[N_{G^\prime}(H):G^\prime \cap H] = \prod_{i=1}^n [N_{G_i^\prime}(H_i): G_i^\prime \cap H_i]$.
Now the claim follows using Lemma \ref{lemma:muprodotto}.
\end{proof}

\begin{example}
Let $G=G_1\times G_2$ where no non-trivial quotients of $G_1$ and $G_2$ are isomorphic.
Then it is easily seen from Goursat's lemma that every maximal subgroup $M$ of $G$ splits as $M=M_1\times M_2$ with $M_i\leq G_i$, $i=1,2$.

For instance, Proposition \ref{prop:prodotto} applies to $G=G_1\times G_2$ where $G_1$ is minimal non-solvable and $G_2$ is solvable, or $G_1$ and $G_2$ are minimal non-solvable with non-isomorphic Frattini quotients.
\end{example}

About extensions of groups which satisfy the $(\mu,\lambda)$-property, we give the following elementary remark, which follows from the same arguments as in the proof of Theorem \ref{th:minimalnonsolvable}. 

%We begin with the following
%trivial but remarkable observation, whose validity follows immediately from arguments in the Proof of Theorem \ref{th:minimalnonsolvable}:
\begin{remark}
The $(\mu,\lambda)$-property holds for every Frattini extension of a group satisfying the $(\mu,\lambda)$-property.
\end{remark}

Remark \ref{remark:extensions} shows how to get groups not satisfying the $(\mu,\lambda)$-property from a given one.

\begin{remark}\label{remark:extensions}
If a group $G$ is a finite extension of a group $\bar G$ which does not satisfy the $(\mu,\lambda)$-property, then $G$ does not satisfy the $(\mu,\lambda)$-property.
\end{remark}

In fact, arguing as in the proof of Theorem \ref{th:minimalnonsolvable}, it is clear that a subgroup $\bar H$ for which the $(\mu,\lambda)$-property fails in $\bar G$ is the homomorphic image of a subgroup $H$ for which the $(\mu,\lambda)$-property fails in $G$.

\section{Final remarks}
\label{sec:examples}

We start this section by considering a class of finite groups which contains the minimal non-solvable ones, namely the finite non-solvable N-groups; recall that an N-group is a group all of whose local subgroups are solvable.
Finite non-solvable N-groups were classified by Thompson \cite{Thompson}; they are almost simple groups $G$, where $S\leq G\leq{\rm Aut}(S)$ and $S$ is one of the following simple groups:
\begin{itemize}
\item the linear group $L_2(q)$, for some prime power $q\geq4$;
\item the Suzuki group $Sz(q)$, for some non-square power $q\geq8$ of $2$;
\item the linear group $L_3(3)$;
\item the unitary group $U_3(3)$;
\item the alternating group $A_7$;
\item the Mathieu group $M_{11}$;
\item the Tits group $^2 F_4(2)^{\prime}$.
\end{itemize}

%\textcolor{red}{
%We collect in this section some examples and counterexamples for the $(\mu,\lambda)$-property for non-solvable groups, starting in Remark \ref{remark:Ngroups} with a class of groups which generalize the minimal simple ones, namely the simple N-groups.
%}

%\begin{remark}\label{remark:Ngroups}
%Recall that an N-group is a group all of whose local subgroups are solvable. Non-solvable N-groups $G$ were classified by Thompson \cite{Thompson}; they satisfy $S\leq G\leq\aut(S)$, where $S$ is one of the following simple groups: the special linear group $L_2(q)$ for some prime power $q$, the special linear group $L_3(3)$, the Suzuki group $Sz(q)$ for some non-square power $q$ of $2$, the unitary group $U_3(3)$, the alternating group $A_7$, the Mathieu group $M_{11}$, the Tits group $^2 F_4(2)^\prime$.

%From Propositions \ref{lemma:l2q} and \ref{lemma:szq}, and from direct computation with Magma \cite{MAGMA} for $L_3(3)$, $U_3(3)$, $A_7$, $M_{11}$, $^2 F_4(2)^\prime$, it follows that $U_3(3)$ is the only simple N-group which does not satisfies the $(\mu,\lambda)$-property.
%\end{remark}

\begin{proposition}\label{remark:Ngroups}
Every finite simple N-group $G$ other than $U_3(3)$ satisfies the $(\mu,\lambda)$-property, unlike $U_3(3)$.
\end{proposition}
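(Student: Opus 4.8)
The plan is to run through Thompson's classification of the finite non-solvable N-groups, recalled above. The finite simple N-groups consist of the three infinite families $L_2(q)$ with $q\geq4$ a prime power and $Sz(q)$ with $q$ a non-square power of $2$ and $q\geq8$, together with the five individual groups $L_3(3)$, $U_3(3)$, $A_7$, $M_{11}$, ${}^2F_4(2)'$. For the two families and for $L_3(3)$ the $(\mu,\lambda)$-property has already been obtained earlier in the paper: it follows from Proposition \ref{lemma:l2q} for $L_2(q)$; from Proposition \ref{lemma:szq} for $Sz(q)$, since a non-square power $q=2^e\geq8$ of $2$ is exactly one with $e\geq3$ odd, which is the hypothesis of that proposition; and from Theorem \ref{prop:minsimple} for the minimal simple group $L_3(3)$.

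It then remains to settle the four groups $A_7$, $M_{11}$, ${}^2F_4(2)'$ and $U_3(3)$, which I would handle by direct computation in Magma \cite{MAGMA}. Since each of these groups is simple, we have $G=G'$ and the $(\mu,\lambda)$-property reduces to the identity $\mu(H)=[N_G(H):H]\cdot\lambda(H)$. The decisive simplification comes from Lemmas \ref{maxintmu} and \ref{maxintlambda}: as both $\mu(H)$ and $\lambda(H)$ vanish for $H\notin\MaxInt(G)$, the identity is automatic (with both sides zero) outside $\MaxInt(G)$, so one only has to check it on the finitely many conjugacy classes of subgroups belonging to $\MaxInt(G)$. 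For each such $G$ the plan is to build $\MaxInt(G)$ from the list of maximal subgroups, to determine its poset of conjugacy classes, and then to evaluate $\mu$, $\lambda$ and the index $[N_G(H):H]$ recursively on this poset.

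For $A_7$, $M_{11}$ and ${}^2F_4(2)'$ this computation should confirm the identity on every class in $\MaxInt(G)$, yielding the $(\mu,\lambda)$-property. For $U_3(3)$, by contrast, I expect the computation to produce at least one conjugacy class of subgroups $H\in\MaxInt(G)$ for which $\mu(H)\neq[N_G(H):H]\cdot\lambda(H)$, witnessing the failure of the property; exhibiting one such $H$ explicitly completes the argument.

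I expect the main obstacle to be the Tits group ${}^2F_4(2)'$, whose order $|{}^2F_4(2)'|=17\,971\,200$ makes a naive enumeration of the whole subgroup lattice costly. The reduction to $\MaxInt(G)$ afforded by Lemmas \ref{maxintmu} and \ref{maxintlambda}, starting from the known list of maximal subgroups, is precisely what should keep this case tractable; the other three groups, of orders $|A_7|=2520$, $|M_{11}|=7920$ and $|U_3(3)|=6048$, are small enough that the verification is routine.
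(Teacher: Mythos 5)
Your proposal follows essentially the same route as the paper: invoke Thompson's classification, dispose of the families $L_2(q)$ and $Sz(q)$ via Propositions \ref{lemma:l2q} and \ref{lemma:szq}, and settle $L_3(3)$, $A_7$, $M_{11}$, ${}^2F_4(2)'$ and $U_3(3)$ by direct machine computation (the paper does not spell out the reduction to $\MaxInt(G)$, but that is exactly how such a computation is made tractable). The only point you omit is that a finite simple N-group may also be cyclic of prime order, a case not covered by Thompson's list of \emph{non-solvable} N-groups; the paper notes this explicitly, and it is dispatched at once by Pahlings' theorem for solvable groups.
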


\begin{proof}
Unless $G$ is cyclic of prime order, $G$ is contained in the above list.
If $G=L_2(q)$ or $G=Sz(q)$, the claim follows by Propositions \ref{lemma:szq} and \ref{lemma:l2q}. By direct inspection with Magma \cite{MAGMA} follows that the groups $L_3(3)$, $A_7$, $M_{11}$ and $^2 F_4(2)^{\prime}$ satisfy the $(\mu,\lambda)$-property, while $U_3(3)$ does not.
\end{proof}

By performing some computations for groups of small order, we notice the following facts:
\begin{itemize}
\item all groups of order at most $2000$ satisfy the $(\mu,\lambda)$-property;
\item the group $U_3(3)$, of order $6048$, is the smallest simple group which does not satisfy the $(\mu,\lambda)$-property;
\item the subgroups $H\leq U_3(3)$ at which the $(\mu,\lambda)$-property fails are those isomorphic to $C_2$, $S_3$, $D_8$, or $S_4$.
\end{itemize}

We do not know whether $U_3(3)$ is the smallest non-solvable group not satisfying the $(\mu,\lambda)$-property.

The inspection of $U_3(3)$ may give hints about the structure of groups for which the $(\mu,\lambda)$-property fails, as explained by the following remark.

\begin{remark}\label{remark:U33}
%The subgroups $H\leq U_3(3)$ for which the $(\mu,\lambda)$-property fails are those isomorphic to $C_2$, $S_3$, $D_8$ and $S_4$.
%To understand this fact, argue as follows.
Let $G$ be a finite group, $H$ be a subgroup of $G$, $\cS=\{K\leq G : H\leq K\}$ be the subposet of the subgroup lattice of $G$ made by the overgroups of $H$, and $\bar{\cS}=\{[K]\leq[G] : [H]\leq[K]\}$ be the corresponding subposet of the conjugacy classes $[K]$ with $[H]\leq[K]$.
Suppose that, for every $K\in\cS\setminus\{G\}$, we have $N_{G^\prime}(K)=G^\prime \cap K$; this holds for instance if $G$ is perfect and every $K\in\cS$ is self-normalizing in $G$.
Then the $(\mu,\lambda)$-property for $H$ holds if and only $\mu(H)=\lambda(H)$, and hence if and only if the posets $\cS$ and $\bar{\cS}$ are isomorphic.

In the case $S_4\cong H\leq G= U_3(3)$, we have that $\cS=\{H,M_1,M_2,M_3,G\}\not\cong\bar{\cS}=\{[H],[M_1],[M_2]=[M_3],[G]\}$, with $M_i$ is a maximal subgroup of $G$; for every $K\in\cS$, $K$ is self-normalizing in the simple group $G$. Thus, $\cS\not\cong\bar{\cS}$ implies that the $(\mu,\lambda)$-property fails at $H$.
\end{remark}

%\begin{remark}\label{remark:extensions}
%If a group $G$ is a finite extension of a group $\bar G$ which does not satisfy the $(\mu,\lambda)$-property, then $G$ does not satisfy the $(\mu,\lambda)$-property.
%
%In fact, arguing as in the proof of Theorem \ref{th:minimalnonsolvable}, it is clear that a subgroup $\bar H$ for which the $(\mu,\lambda)$-property fails in $\bar G$ is the homomorphic image of a subgroup $H$ for which the $(\mu,\lambda)$-property fails in $G$.
%\end{remark}

%Note that every finite minimal non-solvable group has non-solvable length $1$, and is $1$-rarefied (see the definition in \cite{FLP})
%\textcolor{red}{(ci serve? dubito)}

\begin{center}
\begin{table}[!ht]
\caption{Subgroups $H\leq G=L_2(q)$, $q=2^e$, with $\mu(H)\ne0$ or $\lambda(H)\ne0$}\label{tabella:l2qpari}
\def\arraystretch{1.3}
\begin{tabular}{|c| c c c c c c c|}
\hline
$H$ & $\mathcal{S}_h$ & $M_{h,2r(h)}$ & $D_{4r(h)}$ & $D_{4s(h)}$ & $C_{2r(h)}$ & $C_2$ & $\{1\}$ \\ \hline
$\forall h\mid e$\textrm{ with} & $h>1$ & $h>1$ & $h>1$ & $h>1$ & $h>1$ & $-$ & $-$ \\
$\mu(H)$ & $\mu\left(\frac{e}{h}\right)$ & $-\mu\left(\frac{e}{h}\right)$ & $-\mu\left(\frac{e}{h}\right)$ & $-\mu\left(\frac{e}{h}\right)$ & $\frac{2(q-1)}{2^h-1}\mu\left(\frac{e}{h}\right)$ & $-q\mu(e)$ & $q(q^2-1)\mu(e)$ \\
$N_G(H)$ & $H$ & $H$ & $H$ & $H$ & $D_{2(q-1)}$ & $E_q$ & $G$  \\
$\lambda(H)$ & $\mu\left(\frac{e}{h}\right)$ & $-\mu\left(\frac{e}{h}\right)$ & $-\mu\left(\frac{e}{h}\right)$ & $-\mu\left(\frac{e}{h}\right)$ & $\mu\left(\frac{e}{h}\right)$ & $-2\mu(e)$ & $\mu(e)$ \\
\hline
\end{tabular}
\end{table}
\end{center}

\begin{center}
\begin{table}[!ht]
\caption{Subgroups $H\leq G=L_2(q)$ with $\mu(H)\ne0$ or $\lambda(H)\ne0$; $q=p^e$; $p$ odd; $e>2$, or $e=2$ and $p\equiv\pm1\pmod5$; $H\notin\{D_4,C_3,C_2,\{1\}\}$.}\label{tabella:l2qdispari}
\def\arraystretch{1.3}
\begin{tabular}{|c| c c c c c|}
\hline
$H$ & $\mathcal{G}_h$ & $\mathcal{S}_h$ & $M_{h,2r(h)}$ & $M_{h,r(h)}$ & $D_{4r(h)}$ \\ \hline
$\forall h\mid e$ with & $\frac{e}{h}$ even & $\frac{e}{h}$ odd & $\frac{e}{h}$ even & $\frac{e}{h}$ odd & $\frac{e}{h}$ even, $p^h\ne3$ \\
$\mu(H)$ & $\mu\left(\frac{e}{h}\right)$ & $\mu\left(\frac{e}{h}\right)$ & $-\mu\left(\frac{e}{h}\right)$ & $-\mu\left(\frac{e}{h}\right)$ & $-2\mu\left(\frac{e}{h}\right)$ \\
$N_G(H)$ & $H$ & $H$ & $H$ & $H$ & $D_{\gcd\left(8r(h),q-1\right)}$ \\
$\lambda(H)$ & $\mu\left(\frac{e}{h}\right)$ & $\mu\left(\frac{e}{h}\right)$ & $-\mu\left(\frac{e}{h}\right)$ & $-\mu\left(\frac{e}{h}\right)$ & $-\frac{2\mu(e/h)}{\gcd\left(2,\frac{q-1}{4r(h)}\right)}$ \\
\hline
$H$ & $D_{2r(h)}$ & $D_{4s(h)}$ & $D_{2s(h)}$ & $C_{2r(h)}$ & $C_{r(h)}$ \\ \hline
$\forall h\mid e$ with & $\frac{e}{h}$ odd, $p^h\notin\{3,5\}$ & $\frac{e}{h}$ even & $\frac{e}{h}$ odd, $p^h\ne3$ & $\frac{e}{h}$ even, $p^h\ne3$ & $\frac{e}{h}$ odd, $p^h\notin\{3,5\}$ \\
$\mu(H)$ & $-\mu\left(\frac{e}{h}\right)$ & $-2\mu\left(\frac{e}{h}\right)$ & $-\mu\left(\frac{e}{h}\right)$ & $\frac{2(q-1)}{p^h-1}\mu\left(\frac{e}{h}\right)$ & $\frac{2(q-1)}{p^h-1}\mu\left(\frac{e}{h}\right)$ \\
$N_G(H)$ & $H$ & $D_{\gcd\left(8s(h),q-1\right)}$ & $H$ & $D_{q-1}$ & $D_{q-1}$ \\
$\lambda(H)$ & $-\mu\left(\frac{e}{h}\right)$ & $-\frac{2\mu(e/h)}{\gcd\left(2,\frac{q-1}{4s(h)}\right)}$ & $-\mu\left(\frac{e}{h}\right)$ & $2\mu\left(\frac{e}{h}\right)$ & $\mu\left(\frac{e}{h}\right)$ \\
\hline
\end{tabular}
\end{table}
\end{center}

\begin{center}
\begin{table}[!ht]
\caption{Subgroups $H< G=L_2(q)$ with $\mu(H)\ne0$ or $\lambda(H)\ne0$; $q=p^e$; $p$ odd; $e>2$, or $e=2$ and $p\equiv\pm1\pmod5$; $H\in\{ D_4,C_3,C_2,\{1\}\}$.}\label{tabella:l2qdispariecc}
\def\arraystretch{1.3}
\begin{tabular}{|c| c c|}
\hline
$H$ & $D_4$ & $C_3$\\
\hline
$\mu(H)$ & $\begin{array}{l} \alpha-\beta,\;{\rm where } \; \alpha=\begin{cases} -6,&e=2^r; \\ 0,&\textrm{otherwise}; \end{cases} \\ \beta=\begin{cases} 6\mu(e),&p=3,\,e\textrm{ even}, \\ 3\mu(e),&p\in\{3,5\},e\textrm{ odd}; \\ 0,&\textrm{otherwise}. \end{cases} \end{array}$ & $\begin{cases} \frac{(q-1)}{3}\mu(e),& p=7,\,e\textrm{ odd}; \\ \frac{q}{3},& p=3; \\ 0,&\textrm{otherwise}. \end{cases}$ \\
$N_G(H)$ & $\begin{cases} S_4,& q\equiv\pm1\pmod8; \\ A_4,& \textrm{otherwise}. \end{cases}$ & $\begin{cases} E_q,&q\equiv0\pmod3; \\ D_{q-1},&q\equiv1\pmod3; \\ D_{q+1},&q\equiv2\pmod3. \end{cases}$\\
$\lambda(H)$ & $\begin{cases} \frac{1}{6}\mu(D_4),& q\equiv\pm1\pmod8; \\ \frac{1}{3}\mu(D_4),& \textrm{otherwise}. \end{cases}$ & $\begin{cases} \mu(e),& p=7,\,e\textrm{ odd}; \\ 1,& p=3; \\ 0,&\textrm{otherwise}. \end{cases}$ \\
\hline
$H$ & $C_2$ & $\{1\}$ \\
\hline
$\mu(H)$ & $\begin{array}{l} \gamma-\delta,\;{\rm where } \; \gamma=\begin{cases} \frac{1}{2}(q-1),&e=2^r; \\ 0,&\textrm{otherwise}; \end{cases} \\ \delta=\begin{cases} -(q-1)\mu(e),&p=3,\,e\textrm{ even}, \\ \frac{q+1}{2}\mu(e),&p=3,e\textrm{ odd}; \\ -\frac{q-1}{2}\mu(e),&p=5,e\textrm{ odd}; \\ 0,&\textrm{otherwise}. \end{cases} \end{array}$ & $\begin{cases} |G|\mu(e),&p=3,e\textrm{ odd}; \\ 0,&\textrm{otherwise}. \end{cases}$ \\
$N_G(H)$ & $\begin{cases} D_{q-1},&q\equiv1\pmod4; \\ D_{q+1},&q\equiv3\pmod4. \end{cases}$ & $G$ \\
$\lambda(H)$ & $\begin{cases} \frac{2\mu(C_2)}{q-1},&q\equiv1\pmod4; \\ \frac{2\mu(C_2)}{q+1},&q\equiv3\pmod4. \end{cases}$ & $\begin{cases} \mu(e),&p=3,e\textrm{ odd}; \\ 0,&\textrm{otherwise}. \end{cases}$ \\
\hline
\end{tabular}
\end{table}
\end{center}

\begin{center}
\begin{table}[!ht]
\caption{Subgroups $H< G=L_2(q)$ with $\mu(H)\ne0$ or $\lambda(H)\ne0$; $q=p^2$; $p\geq7$ odd; $p\equiv\pm2\pmod5$; $H$ non-maximal subgroup of $G$.}\label{tabella:l2qdispariquad}
\def\arraystretch{1.3}
\begin{tabular}{|c| c c c c c c|}
\hline
$H$ & $C_p : C_{p-1}$ & $C_{\frac{p^2-1}{2}}$ & $D_{2(p+1)}$ & $D_{2(p-1)}$ & $C_{p-1}$ & $A_4$ \\
\hline
$\mu(H)$ & $1$ & $2$ & $2$ & $2$ & $-2(p+1)$ & $2$ \\
$N_G(H)$ & $H$ & $D_{p^2-1}$ & $D_{(p+1)\gcd(4,p-1)}$ & $D_{(p-1)\gcd(4,p+1)}$ & $D_{p^2-1}$ & $S_4$ \\
$\lambda(H)$ & $1$ & $1$ & $\frac{2}{\gcd\left(2,\frac{p-1}{2}\right)}$ & $\frac{2}{\gcd\left(2,\frac{p+1}{2}\right)}$ & $-2$ & $1$ \\
\hline
$H$ & $D_{10}$ & $D_6$ & $D_4$ & $C_3$ & $C_2$ & $\{1\}$ \\
\hline
$\mu(H)$ & $2$ & $2$ & $-6$ & $-\frac{2}{3}(p^2-1)$ & $-\frac{3}{2}(p^2-1)$ & $0$ \\
$N_G(H)$ & $H$ & $D_{12}$ & $S_4$ & $D_{p^2-1}$ & $D_{p^2-1}$ & $G$ \\
$\lambda(H)$ & $2$ & $1$ & $-1$ & $-2$ & $-3$ & $0$ \\
\hline
\end{tabular}
\end{table}
\end{center}

\begin{center}
\begin{table}[!ht]
\caption{Subgroups $H\leq G=Sz(q)$, $q=2^e$, $e\geq3$ odd, with $\mu(H)\ne0$ or $\lambda(H)\ne0$.}\label{tabella:suzuki}
\def\arraystretch{1.3}
\begin{tabular}{|c| c c c c c c c c c|}
\hline
$H$ & $G(h)$ & $F(h)$ & $B_0(h)$ & $A_0(h)$ & $B_1(h)$ & $B_2(h)$ & $C_4$ & $C_2$ & $\{1\}$ \\
\hline
$\forall h\mid e$ with & $h>1$ & $h>1$ & $h>1$ & $h>1$ & $h>1$ & $h>1$ & $-$ & $-$ & $-$ \\
$\mu(H)$ & $\mu\left(\frac{e}{h}\right)$ & $-\mu\left(\frac{e}{h}\right)$ & $-\mu\left(\frac{e}{h}\right)$ & $2\frac{q-1}{2^h-1}\mu\left(\frac{e}{h}\right)$ & $-\mu\left(\frac{e}{h}\right)$ & $-\mu\left(\frac{e}{h}\right)$ & $-q\mu(e)$ & $-\frac{q^2}{2}\mu(e)$ & $|G|\mu(e)$ \\
$N_G(H)$ & $H$ & $H$ & $H$ & $D_{2(q-1)}$ & $H$ & $H$ & $E_q\,.\,C_2$ & $E_q^{1+1}$ & $G$ \\
$\lambda(H)$ & $\mu\left(\frac{e}{h}\right)$ & $-\mu\left(\frac{e}{h}\right)$ & $-\mu\left(\frac{e}{h}\right)$ & $\mu\left(\frac{e}{h}\right)$ & $-\mu\left(\frac{e}{h}\right)$ & $-\mu\left(\frac{e}{h}\right)$ & $-2\mu(e)$ & $-\mu(e)$ & $\mu(e)$ \\
\hline
\end{tabular}
\end{table}
\end{center}

\begin{center}
\begin{table}[!ht]
\caption{Subgroups $H\leq G=R(q)$, $q=3^e$, $e\geq3$ odd, with $\mu(H)\ne0$ or $\lambda(H)\ne0$.}\label{tabella:ree}
\def\arraystretch{1.3}
\begin{tabular}{|c| c c c c c|}
\hline
$H$ & $R(3^h)$ & $(3^h+3^{\frac{h+1}{2}}+1):6$ & $(3^h-3^{\frac{h+1}{2}}+1):6$ & $(3^h)^{1+1+1}:(3^h-1)$ & $2\times L_2(3^h)$ \\
\hline
$\forall h\mid e$ with & $-$ & $-$ & $h>1$ & $-$ & $h>1$ \\
$\mu(H)$ & $\mu\left(\frac{e}{h}\right)$ & $-\mu\left(\frac{e}{h}\right)$ & $-\mu\left(\frac{e}{h}\right)$ & $-\mu\left(\frac{e}{h}\right)$ & $-\mu\left(\frac{e}{h}\right)$ \\
$N_G(H)$ & $H$ & $H$ & $H$ & $H$ & $H$ \\
$\lambda(H)$ & $\mu\left(\frac{e}{h}\right)$ & $-\mu\left(\frac{e}{h}\right)$ & $-\mu\left(\frac{e}{h}\right)$ & $-\mu\left(\frac{e}{h}\right)$ & $-\mu\left(\frac{e}{h}\right)$ \\
\hline
$H$ &  $2\times\left(3^h:\frac{3^h-1}{2}\right)$ & $\left(2^2\times D_{\frac{3^h+1}{2}}\right):3$ & $2^2\times D_{\frac{3^h+1}{2}}$ & $2\times L_2(3)$ & $2^3$ \\
\hline
$\forall h\mid e$ with & $h>1$ & $h>1$ & $h>1$ & $-$ & $-$ \\
$\mu(H)$ & $\mu\left(\frac{e}{h}\right)$ & $-\mu\left(\frac{e}{h}\right)$ & $3\mu\left(\frac{e}{h}\right)$ & $-2\mu(e)$ & $21\mu(e)$ \\
$N_G(H)$ & $H$ & $H$ & $H$ & $H$ & ${\rm A\Gamma L}_1(8)$ \\
$\lambda(H)$ & $\mu\left(\frac{e}{h}\right)$ & $-\mu\left(\frac{e}{h}\right)$ & $3\mu\left(\frac{e}{h}\right)$ & $-2\mu\left(\frac{e}{h}\right)$ & $\mu\left(\frac{e}{h}\right)$ \\
\hline
\end{tabular}
\end{table}
\end{center}

\section{Acknowledgement}

This research was partially supported by the Italian National Group for Algebraic and Geometric Structures and their Applications (GSAGA - INdAM). 
The use of the software Magma \cite{MAGMA} for the computation of the functions $\mu$ and $\lambda$ of a given group, although not necessary for the theoretical proofs, was very useful to the authors during the work in order to test ideas or disprove conjectures.

\end{document}